\documentclass[11pt,letterpaper,reqno]{amsart}
\usepackage{amsmath,amssymb}
\usepackage{mathrsfs}
\usepackage{amsfonts}
\usepackage[dvipsnames]{xcolor}
\usepackage{graphicx}
\usepackage{float}  
\usepackage{cite}
\usepackage{cases}
\usepackage{comment}
\usepackage{esint}
\usepackage{indentfirst}
\allowdisplaybreaks

\theoremstyle{plain}
\newtheorem{theorem}{Theorem}[section]
\newtheorem{lemma}[theorem]{Lemma}
\newtheorem{proposition}[theorem]{Proposition}
\newtheorem{corollary}[theorem]{Corollary}

\theoremstyle{definition}

\theoremstyle{remark}

\numberwithin{equation}{section}

\DeclareMathOperator{\tr}{tr}
\DeclareMathOperator{\osc}{osc}

\DeclareMathOperator{\dist}{dist}
\newcommand{\ac}{\mathrm{ac}}
\newcommand{\sing}{\mathrm{s}}
\newcommand{\Sym}{\operatorname{Sym}(n)}

\usepackage[colorlinks=true,citecolor=blue,linkcolor=blue]{hyperref}

\begin{document}
\title[Interior $W^{2,p}$ regularity]
{Interior $W^{2,p}$ regularity for the sigma-$2$ equation}

\author[Y. Lyu]{Yasheng Lyu}
\address{School of Mathematics and Statistics, Xi'an Jiaotong University, Xi'an, Shaanxi 710049, People's Republic of China}
\email{lvysh21@stu.xjtu.edu.cn}

%**********************************************************************************
\date{}
%\subjclass[2020]{Primary 35J96; Secondary 35B45, 35B65}
\keywords{$k$-Hessian equation, Interior $W^{2,p}$ regularity, Hessian measure, viscosity solution.}

\begin{abstract}
We prove that continuous $2$-convex solutions of the $\sigma_{2}$ equation with a density bounded above and below by positive constants belong to $W^{2,1}_{\mathrm{loc}}$ in every dimension.
If, in addition, the density is continuous, the solutions belong to $W^{2,p}_{\mathrm{loc}}$ for every $1<p<\infty$.
\end{abstract}

\maketitle

\section{Introduction}

In this paper, we study interior $W^{2,p}$ regularity for the $2$-Hessian equation
\begin{equation}\label{eqn1.1}
\sigma_2(D^2u)=f\quad\text{in }\Omega,
\end{equation}
where $\Omega\subset\mathbb R^n$ is a domain and $n\geq2$.
Let $\Sym$ denote the space of real symmetric $n\times n$ matrices.
We use the Frobenius inner product $A:B=\tr(AB)$ and its norm $|A|$.
For $H\in\Sym$, set
\[
\sigma_2(H):=\frac12\bigl((\tr H)^2-|H|^2\bigr),\qquad
\Gamma_2:=\{H:\tr H>0,\ \sigma_2(H)>0\}.
\]
Let $\Phi_2(\Omega)$ denote the class of $2$-convex functions, recalled in
Section~\ref{sec-measures}.

We formulate~\eqref{eqn1.1} using the Hessian measures of Trudinger and
Wang~\cite{TW1,TW2}: a continuous $2$-convex function is a solution if
\[
\mu_2[u]=f\,dx,
\]
where $\mu_2[u]$ is its $2$-Hessian measure, extending
$\sigma_2(D^2u)\,dx$ from smooth functions. For continuous positive $f$,
this agrees with the admissible viscosity formulation.

\begin{samepage}
\begin{theorem}\label{thm1.1}
Let $\Omega\subset\mathbb R^n$ be a domain, $n\geq2$, and let
$u\in C^0(\Omega)\cap\Phi_2(\Omega)$ satisfy
\begin{equation}\label{eqn1.3}
\begin{gathered}
\mu_2[u]=f\,dx,\qquad f\in L^\infty(\Omega),\\
0<\lambda\leq f\leq\Lambda<\infty\quad\text{almost everywhere in }\Omega.
\end{gathered}
\end{equation}
Then $u\in W^{2,1}_{\mathrm{loc}}(\Omega)$.
For every $B_{2r}(x_0)\Subset\Omega$,
\begin{equation}\label{w21-interior-estimate}
\|D^2u\|_{L^1(B_r(x_0))}
\leq C_n r^{n-2}\osc_{B_{2r}(x_0)}u,
\end{equation}
where $C_n$ depends only on $n$.
\end{theorem}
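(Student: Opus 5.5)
The plan is to first get $D^2u$ as a matrix-valued Radon measure with the quantitative bound, and then to show that this measure has no singular part.

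\emph{Step 1 (measure bound).} For $H$ in the closure of $\Gamma_2$ we have $|H|^2=(\tr H)^2-2\sigma_2(H)\le(\tr H)^2$, so $|H|\le \tr H$. Take the mollifications $u_\varepsilon=u*\rho_\varepsilon$; they are smooth and $2$-convex, and by the weak continuity of Hessian measures \cite{TW1,TW2} $\mu_2[u_\varepsilon]\rightharpoonup\mu_2[u]$. Since $|D^2u_\varepsilon|\le\Delta u_\varepsilon$ pointwise, it is enough to bound $\Delta u_\varepsilon$ in $L^1$. Take a cutoff $\eta\in C_c^\infty(B_{2r}(x_0))$ with $\eta\equiv1$ on $B_r(x_0)$ and $|D^2\eta|\le C r^{-2}$. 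Integrating by parts twice gives
\[
\int_{B_r}\Delta u_\varepsilon\le\int\eta\,\Delta u_\varepsilon=\int (u_\varepsilon-m)\,\Delta\eta\le C_n r^{n-2}\osc_{B_{2r}}u_\varepsilon ,
\]
for any constant $m$. Letting $\varepsilon\to0$, $D^2u$ is a Radon measure with $|D^2u|(B_r)\le\Delta u(B_r)\le C_nr^{n-2}\osc_{B_{2r}}u$. Thus $u\in BH_{\rm loc}$ and \eqref{w21-interior-estimate} holds, as soon as we know $D^2u\ll dx$.

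\emph{Step 2 (no singular part).} Write $D^2u=A\,dx+D^2_su$. By Alberti's rank-one theorem for $BV$ gradients, $D^2_su=a\otimes a\,|D^2_su|$ with $|a|=1$; the sign is fixed because the trace is nonnegative. Also, $u$ is twice approximately differentiable a.e. with Hessian $A$, and on the absolutely continuous part the equation should give $\sigma_2(A)=f\ge\lambda$ and $A\in\Gamma_2$ a.e. For a smooth approximation whose Hessian splits as $A+t\,a\otimes a$, we have
\[
\sigma_2(A+t\,a\otimes a)=\sigma_2(A)+t\bigl(\tr A-a^TAa\bigr),
\]
and $\tr A-a^TAa=\sigma_1(A|_{a^\perp})>0$ for $A\in\Gamma_2$. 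So the singular Hessian contributes positively and linearly to $\mu_2[u]$. I would make this rigorous through the divergence structure $\sigma_2(D^2v)=\tfrac12\partial_i(v_i\Delta v-v_jv_{ij})$ and a lower semicontinuity argument. The aim is the inequality
\[
\mu_2[u]\ \ge\ \sigma_2(A)\,dx+c\,\bigl(\text{interaction of }A\text{ with }D^2_su\bigr),
\]
whose right-hand side has a nonzero singular part unless $D^2_su=0$. That would contradict $\mu_2[u]=f\,dx$.

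\emph{Main obstacle.} The difficulty is that $D^2_su$ lives on a Lebesgue-null set, where $A$ has no pointwise meaning. So the interaction term has to be captured by a blow-up at $|D^2_su|$-typical points rather than pointwise. The first attempt would be this. Blow up at such a point $x$, normalizing by the local mass $M_r=|D^2u|(B_r(x))/r^n\to\infty$; the rescaled limit is then a one-dimensional convex profile in the direction $a$. The rescaled Hessian measures equal $f/M_r^2\to0$, so the pure singular scaling gives no contradiction by itself. The contradiction must instead come from the mixed term, which scales like $M_r^{-1}$ and therefore dominates $f/M_r^2$. I would try to show that this mixed term is bounded below by $c\lambda^{1/2}$ times the singular mass. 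The tool would be the concavity of $\sigma_2^{1/2}$ on $\Gamma_2$, applied to the convolutions $D^2u_\varepsilon=A*\rho_\varepsilon+(D^2_su)*\rho_\varepsilon$. This would force $\mu_2[u]$ to charge the support of $D^2_su$, contradicting absolute continuity of $\mu_2[u]$.
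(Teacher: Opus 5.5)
Your Step 1 is correct; it is exactly how the paper proves \eqref{w21-interior-estimate} once $(D^2u)_{\sing}=0$ is known. The gap is Step 2, which carries the entire content of the theorem. You state as a goal that the interaction of $A$ with $(D^2u)_{\sing}$ is at least $c\lambda^{1/2}|(D^2u)_{\sing}|$, and the tools you name do not give it.

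Concavity of $G$ yields only $\mathscr B(X,Y)\ge 2G(X)G(Y)$. For the asymptotically rank-one $Y=(D^2u)_{\sing}*\rho_\varepsilon$ we have $G(Y)\approx0$, so this is empty. The true interaction is $\mathscr B(X,a\otimes a)=a^TN(X)a$ with $X=A*\rho_\varepsilon$. For $X\in\Gamma_2$ with $\sigma_2(X)\ge\lambda$, this can be as small as about $\lambda/|X|$ when $X$ is large along $a$; take $X=\mathrm{diag}(K,\epsilon,\dots,\epsilon)$ with $(n-1)K\epsilon\approx\lambda$.

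So suppose the absolutely continuous Hessian blows up along $a$ near the singular support in an integrable way. For instance, let $A\approx\mathrm{diag}(t^{-1/2},t^{1/2},\dots,t^{1/2})$ with $t=\dist(\cdot,\Sigma)$ near a hypersurface $\Sigma$ carrying $(D^2u)_{\sing}$. Then the mollified interaction on the $\varepsilon$-slab integrates to $O(\varepsilon^{1/2})$, and nothing forces $\mu_2$ to charge $\Sigma$.

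Pointwise algebra, together with $A\in L^1$ and $\sigma_2(A)\ge\lambda$, cannot exclude this. A proof must use that $A$ and $(D^2u)_{\sing}$ are parts of the Hessian of a single solution. Your blow-up cannot see this either. At $|(D^2u)_{\sing}|$-a.e. point, the absolutely continuous part has mass $o(M_rr^n)$ in $B_r(x)$, so it disappears in the limit. The scaling comparison of $M_r^{-1}$ against $M_r^{-2}$ presupposes a nondegenerate transverse part of $A$ on a Lebesgue-null set, which is meaningless for an $L^1$ density. Also, $\sigma_2(A)=f$ a.e.\ is asserted rather than proved; what is available is $G(A)\ge m$, via \eqref{eqn2.6}.

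The paper never evaluates $A$ near the singular set. Instead it pairs $u$ with an auxiliary function solving \eqref{eqn4.2}, in which the density is switched to a smooth $b$ wherever $\Delta v_j\ge\tau P$.

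1. The weighted trace estimate (Lemma~\ref{wp-weighted-bound}, Corollary~\ref{thm3.3}) gives $\Delta v_j\le C_1P$ on $V_1$, uniformly in $j$ and $b$.
2. Hence the limit satisfies $v-\kappa|x|^2/2\in\Phi_2(V_1)$ with $\kappa=c/P$. Positivity and bilinearity of the mixed measure then give $\mathcal B[u,v]\ge(n-1)\kappa\Delta u$. This is the lower bound on the interaction with the singular part that you were after, with a uniformly strictly admissible partner in place of $A$.
3. The mass of the defect $\nu=\mathcal B[u,v]-2g\bar h\,dx$ is controlled globally by the comparison identity \eqref{eqn2.8}: $\nu(D)\le\int|g-b|^2\theta$.
4. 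The ratio bound $\mathscr B(A,B)\ge m^2S/T$ upgrades this to $P\nu(D)\le 2R\int|g-b|^2T$.
5. Combining, $\mu_s(U)\le C\int|g-b|^2T$ with $C$ independent of $b$. Letting $b\to g$ gives $\mu_s=0$.

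Such a comparison object, together with a global integral identity controlling its interaction with $u$, is the missing idea. Without it, your Step 2 remains a heuristic rather than a proof.
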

\end{samepage}

\begin{theorem}\label{thm-w2p}
Let $\Omega\subset\mathbb R^n$ be a domain, $n\geq2$, and let
$u\in C^0(\Omega)\cap\Phi_2(\Omega)$ satisfy
\[
\mu_2[u]=f\,dx,\qquad f\in C^0(\Omega),\qquad
0<\lambda\leq f\leq\Lambda<\infty.
\]
Then $u\in W^{2,p}_{\mathrm{loc}}(\Omega)$ for every $1<p<\infty$.
For every $B_{2r}(x_0)\Subset\Omega$,
\begin{equation}\label{wp-interior-estimate}
r^{-n/p}\|D^2u\|_{L^p(B_r(x_0))}\leq C,
\end{equation}
where $C$ depends only on $n$, $p$, $\lambda$, $\Lambda$,
$r^{-2}\osc_{B_{2r}(x_0)}u$, and the modulus of continuity of the
rescaled density $\widetilde f(y)=f(x_0+ry)$ on $B_2$.
\end{theorem}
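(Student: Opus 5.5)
The plan is to first establish $W^{2,1}$ (Theorem~\ref{thm1.1}) and the interior $W^{2,p}$ estimate for \emph{smooth} solutions with smooth densities, with constants depending only on the stated quantities. We then pass to the general case by approximation. We rescale $\widetilde u(y)=r^{-2}\bigl(u(x_0+ry)-u(x_0)\bigr)$ on $B_2$. This reduces \eqref{wp-interior-estimate} to $r=1$, with $\osc\widetilde u$ controlled and $\widetilde f$ having a fixed modulus of continuity.

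For the approximation, we solve Dirichlet problems $\sigma_2(D^2u_k)=f_k$ in $B_{3/2}$, with $f_k$ smooth, $\lambda/2\le f_k\le 2\Lambda$, $f_k\to\widetilde f$ uniformly with a uniform modulus, and smooth boundary data $\varphi_k\to\widetilde u$ uniformly. The Caffarelli--Nirenberg--Spruck theory gives smooth $2$-convex solutions. Comparison and the stability of Hessian measures under uniform convergence (Trudinger--Wang) then give $u_k\to\widetilde u$ locally uniformly. It therefore suffices to prove the a priori bound $\|D^2u_k\|_{L^p(B_1)}\le C$ uniformly in $k$. Weak compactness in $W^{2,p}$ then transfers the bound to $\widetilde u$, and $W^{2,p}_{\mathrm{loc}}$ follows by covering.

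For the a priori bound on a smooth solution $v$, we use the perturbation (Caffarelli-type) approach. At a small scale $\rho$ around any point $z\in B_1$, we compare $v$ with the solution $h$ of $\sigma_2(D^2h)=f(z)$ with $h=v$ on $\partial B_\rho(z)$. The continuity of $f$ and an ABP-type estimate for $\sigma_2$ give
\[
\|v-h\|_{L^\infty(B_\rho(z))}\le C\rho^2\,\omega(\rho)\,\lambda^{-1/2}.
\]
The constant-coefficient equation $\sigma_2(D^2h)=c$ is concave and uniformly elliptic once $D^2h$ is bounded. We therefore need interior $C^{1,1}$ (indeed $C^{2,\alpha}$, by Evans--Krylov) estimates for $h$ in terms of its oscillation. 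For $\sigma_2$ these are the Hessian estimates for the sigma-2 equation (Warren--Yuan for $n=3$; Shankar--Yuan and subsequent work in general dimensions, at least for the constant right-hand side, or semiconvex solutions). With these, the standard Caffarelli covering and power-decay argument for the sets where $v$ fails to be touched by paraboloids of opening $M$ yields $|\{\Theta>M\}\cap B_1|\le CM^{-p}$ for every $p$. Here the linearized operator $\sigma_2^{ij}(D^2v)$ plays the role of the uniformly elliptic structure, after using the $W^{2,1}$ control from Theorem~\ref{thm1.1} to restrict attention to points where $|D^2v|$ is moderate.

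The main obstacle is the lack of uniform ellipticity: $\sigma_2$ is elliptic on $\Gamma_2$, but its ellipticity degenerates when $|D^2v|$ is large. So we cannot directly apply Caffarelli's $W^{2,p}$ theory, and we need an interior Hessian bound for the constant-density problem valid in every dimension. My first attempt would be to run the density-decay argument only on the good set. On the set where $v$ has a paraboloid of opening $M$ touching from above and below, $D^2v$ is bounded and $\sigma_2$ is uniformly elliptic there. We would then use Theorem~\ref{thm1.1} to show that the bad set has small measure at each scale, which is the input the iteration needs. If a general-dimension Hessian estimate for $h$ is not available, I would instead use the $W^{2,1}$ estimate~\eqref{w21-interior-estimate} applied to $v-h$, which has small oscillation, combined with an $L^{1+\varepsilon}$ improvement from continuity of $f$, and bootstrap. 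I expect this bootstrap to be the delicate step.
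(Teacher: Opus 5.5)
There is a genuine gap, and it sits in the decay step, which carries the entire content of the theorem. Your approximation scheme is fine. So is the $L^\infty$ comparison $\|v-h\|_{L^\infty(B_\rho(z))}\le C\rho^2\omega(\rho)$: it is the determinant-form ABP the paper also uses, via $\det N(H)\ge\sigma_2(H)^{n/2}$. The constant-density Hessian estimate you were unsure about is also available in every dimension, by Li--Wu.

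The problem is scaling. Caffarelli's key lemma at level $k$ normalizes $v$ on a cube of side $s$ by the opening $M^k$. The function $\tilde v(y)=(M^ks^2)^{-1}\bigl(v-\ell\bigr)(x_Q+sy)$ has unit oscillation but solves $\sigma_2(D^2\tilde v)=M^{-2k}f$. For a uniformly elliptic $F$, the rescaled operators $M^{-k}F(M^k\,\cdot\,)$ stay in a fixed class with uniform $C^{1,1}$ estimates. For $\sigma_2$ the density instead tends to zero, and the comparison solutions have no uniform $C^{1,1}$ bound. To see this, note that $|x_1|$ is $2$-convex with $\mu_2[|x_1|]=0$. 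Solutions of $\sigma_2=\varepsilon^2$ in $B_1$ with boundary values $|x_1|$ lie between $|x_1|-C\varepsilon$ and $|x_1|$ by comparison, so their interior Hessians blow up as $\varepsilon\to0$. Equivalently, any Hessian bound for $\sigma_2=1$ must be superlinear in the oscillation, and the iteration cannot close.

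There is a second, independent failure. $v-h$ satisfies only a linear equation whose ellipticity ratio is governed by $|D^2v|$ on the whole cube, so the $W^{2,\delta}$ power-decay estimate is unavailable. Restricting attention to the set where $v$ is touched by paraboloids does not repair this. The ABP and measure estimates must be run on an open cube, where $|D^2v|$ is uncontrolled.

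Your proposed substitutes do not supply the missing smallness. Theorem~\ref{thm1.1} gives at best $|\{|D^2v|>t\}|\lesssim t^{-1}$, a fixed rate. Reaching every $p$ requires a per-level decay factor that can be made arbitrarily small using only the continuity of $f$. Also, $v-h$ is a difference of $2$-convex functions, not a solution of a $\sigma_2$ equation, so \eqref{w21-interior-estimate} does not apply to it. The ``$L^{1+\varepsilon}$ improvement and bootstrap'' you leave unspecified is exactly what would have to be proved.

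The paper gets the smallness by a different mechanism, in four steps.
\begin{enumerate}
\item It solves an auxiliary problem in which $g=\sqrt f$ is replaced by a smooth minorant $b$, with $0\le g-b\le d$, only where the weighted trace $(W-v-\delta_0)_+^a\Delta v$ of the auxiliary solution is large (above $2P$).
\item A Chou--Wang maximum-principle argument (Lemma~\ref{wp-weighted-bound}) caps this weighted trace by $C_1P$. The smoothness of $b$ enters only through the threshold $P_b$.
\item The Chen--Zhou--Zhu comparison identity gives $\int_D\mathscr D(D^2u,D^2v)\le d^2|\mathcal A_P|$.
\item The lower bound $\mathscr D\ge m^2(T/S+S/T)-2M^2$, together with ABP, turns this into the tail contraction $H_X(qt)\le (A_0d^2/c)\,H_X(t)$.
\end{enumerate}
The contraction factor can be made as small as needed because the comparison energy is quadratic in $d$.
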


The conclusion of Theorem~\ref{thm1.1} also holds under the admissible
viscosity inequalities
\begin{equation}\label{eqn1.4}
\lambda\leq\sigma_2(D^2u)\leq\Lambda\quad\text{in }\Omega,
\end{equation}
which imply the measure hypothesis~\eqref{eqn1.3}; see
Lemma~\ref{thm2.4}.

For the Monge--Amp\`ere equation, Caffarelli~\cite{Caffarelli} proved
interior $W^{2,p}$ estimates for strictly convex solutions with continuous
positive density. For densities bounded above and below by positive
constants, De Philippis and Figalli~\cite{DF} established
$W^{2,1}_{\mathrm{loc}}$ regularity for strictly convex solutions.
De Philippis, Figalli, and Savin~\cite{DFS}, and independently
Schmidt~\cite{Schmidt}, improved this to $W^{2,1+\varepsilon}_{\mathrm{loc}}$.
Mooney~\cite{Mooney} removed strict convexity at the $W^{2,1}$ endpoint
and subsequently obtained a logarithmic improvement~\cite{MooneyEstimate}.
For $n\geq3$, his constant-density examples show that
$W^{2,1+\varepsilon}_{\mathrm{loc}}$ regularity cannot hold in general for
any $\varepsilon>0$.
In dimension two, $\sigma_2=\det$ and $2$-convexity is convexity;
positive bounded densities imply strict convexity, so both theorems
follow from this theory.

For the quadratic Hessian equation, the constant-density interior
Hessian estimate was proved by Warren and Yuan~\cite{WY} in dimension
three and Shankar and Yuan~\cite{SY25} in dimension four.
In higher dimensions, Guan and Qiu~\cite{GQ}, McGonagle, Song, and
Yuan~\cite{MSY}, and Shankar and Yuan~\cite{SY20} obtained estimates for
smooth convex, almost convex, and semiconvex solutions, respectively.
Mooney~\cite{MooneyStrict} and Shankar and Yuan~\cite{SY21} established
regularity in the convex and almost convex viscosity classes.
Li and Wu~\cite{LW} proved smoothness and interior Hessian bounds for
continuous admissible viscosity solutions of $\sigma_2=1$ in every
dimension, with constants depending only on the dimension and the
$L^\infty$ norm of the solution.

For variable densities in dimension three, Qiu~\cite{Qiu} proved Hessian
estimates for positive $C^2$ densities, Xu~\cite{Xu} obtained perturbative
$C^{2,\alpha}$ estimates under a small H\"older seminorm condition,
and Zhou~\cite{Zhou24} established $C^{2,\alpha}$ regularity for positive
Lipschitz densities. Fan~\cite{Fan} treated positive $C^{1,1}$ densities
in dimension four.
In arbitrary dimensions, Chen, Jian, and Zhou~\cite{CJZ} obtained
Hessian estimates for smooth convex solutions with constants depending
on the density through its positive lower bound and Lipschitz norm.
Chen, Jian, Tu, and Zhou~\cite{CJTZ} proved $C^2$ regularity for convex
viscosity solutions with positive Lipschitz density; their argument uses
weighted $W^{2,p}$ estimates for smooth admissible solutions.
For positive $C^\alpha$ densities, Zhou and Zhu~\cite{ZZ} proved
$C^{2,\alpha}$ regularity for convex viscosity solutions, and
Chen, Zhou, and Zhu~\cite{CZZ} treated the full admissible viscosity class.

Related integrability results include almost everywhere second-order
differentiability for Lipschitz admissible solutions with bounded density
in dimensions $n\geq4$~\cite{Fan} and $W^{2,1}_{\mathrm{loc}}$
regularity for convex solutions of $\sigma_k=1$, with new results
for $3\leq k<n$~\cite{Hu}.
For the $k$-Hessian equation with smooth positive density,
Urbas~\cite{Urbas01} obtained interior Hessian bounds from a priori
$W^{2,p}$ control for $p>k(n-1)/2$.
For smooth solutions of $\sigma_2=1$, Mooney~\cite{MooneyQuadratic}
showed that $p>2$ suffices.

Our proof uses the quadratic comparison identity and trace-dependent
Dirichlet modification of Chen, Zhou, and Zhu~\cite{CZZ}, and the
constant-density estimates of Li and Wu~\cite{LW}.
For measurable densities, we allow signed density errors in the modified
equation and pass the comparison identity to mixed Hessian measures.
A weighted trace estimate then eliminates the singular part of the
distributional Hessian. For continuous densities, a modification depending
on the comparison gap yields a contraction of the weighted trace tails,
which gives every finite $L^p$ exponent.
These arguments occupy Sections~\ref{sec-w21} and~\ref{sec-w2p}, respectively.

\section{Interior \texorpdfstring{$W^{2,1}$}{W2,1} regularity}\label{sec-w21}

\subsection{Hessian measures}\label{sec-measures}

A function $u:\Omega\to[-\infty,\infty)$ is $2$-convex if it is upper
semicontinuous, is not identically $-\infty$ on any connected component
of $\Omega$, and every $C^2$ function touching $u$ from above has its
Hessian in $\overline\Gamma_2$ at the contact point. We denote this class
by $\Phi_2(\Omega)$.
A smooth function is called admissible if its Hessian lies in $\Gamma_2$.
In~\eqref{eqn1.4}, the lower inequality is tested by $C^2$ functions
touching from above; the upper inequality is tested by $C^2$ functions
touching from below whose Hessians belong to $\Gamma_2$ at the contact
point.

The Hessian measure construction~\cite[Theorem~1.1]{TW2} associates to
each $u\in\Phi_2(\Omega)$ a unique nonnegative Radon measure $\mu_2[u]$.
It agrees with $\sigma_2(D^2u)\,dx$ when $u$ is $C^2$ and is weakly
continuous: if $u_j,u\in\Phi_2(\Omega)$ and $u_j\to u$ locally in $L^1$,
then
\[
\int_\Omega\varphi\,d\mu_2[u_j]
\longrightarrow\int_\Omega\varphi\,d\mu_2[u]
\qquad(\varphi\in C_c(\Omega)).
\]

Set
\[
G(H)=\sqrt{\sigma_2(H)}\quad(H\in\Gamma_2),\qquad
N(H)=(\tr H)I-H\quad(H\in\Sym).
\]
The function $G$ is concave and homogeneous of degree one, and $DG(H)=N(H)/(2G(H))$. For $A,B\in\overline\Gamma_2$, define
\begin{equation}\label{eqn2.1}
\mathscr B(A,B)=(\tr A)(\tr B)-A:B,
\qquad \mathscr D(A,B)=\mathscr B(A,B)-2G(A)G(B),
\end{equation}
where $G$ is extended continuously to the closed cone. Concavity and
homogeneity give $\mathscr D(A,B)\geq0$. For $2$-convex functions, the
corresponding mixed measure is
\begin{equation}\label{eqn2.2}
\mathcal B[u,v]=\mu_2[u+v]-\mu_2[u]-\mu_2[v].
\end{equation}
Smooth approximation, polarization, and the preceding weak continuity
show that $\mathcal B$ is nonnegative, symmetric, bilinear, and weakly
continuous under local $L^1$ convergence of both arguments. In particular, for $q(x)=|x|^2/2$,
\begin{equation}\label{eqn2.3}
\mathcal B[u,q]=(n-1)\Delta u.
\end{equation}

For a matrix-valued measure, $|\cdot|$ denotes total variation in the
Frobenius norm; the subscripts $\ac$ and $\sing$ denote the absolutely
continuous and singular parts with respect to Lebesgue measure.

\begin{lemma}\label{thm2.1}
Let $u,v\in\Phi_2(\Omega)$ be locally bounded. Their distributional Hessians are locally finite measures, and
\begin{equation}\label{eqn2.4}
|D^2u|\leq\Delta u,\qquad u\in W^{1,1}_{\mathrm{loc}}(\Omega).
\end{equation}
Write
\[
D^2u=A\,dx+(D^2u)_{\sing},\qquad
D^2v=B\,dx+(D^2v)_{\sing}.
\]
Then $A,B\in\overline\Gamma_2$ almost everywhere, and
\begin{equation}\label{eqn2.5}
\mathcal B[u,v]\geq\mathscr B(A,B)\,dx.
\end{equation}
If $u_j$ are smooth admissible functions converging to $u$ locally in $L^1$, with $G(D^2u_j)\geq m>0$, then
\begin{equation}\label{eqn2.6}
G(A)\geq m\quad\text{almost everywhere}.
\end{equation}
\end{lemma}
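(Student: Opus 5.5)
The plan is to argue by smooth approximation and pass every assertion to the limit using weak continuity and lower semicontinuity. For locally bounded $u\in\Phi_2(\Omega)$, standard mollification $u_\varepsilon=u*\rho_\varepsilon$ gives smooth functions with $D^2u_\varepsilon\in\overline\Gamma_2$ (2-convexity is preserved under averaging of translates, since $\overline\Gamma_2$ is a convex cone). Moreover, $u_\varepsilon\to u$ locally in $L^1$ because 2-convex functions are subharmonic and locally integrable.

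\textbf{Step 1: the bound \eqref{eqn2.4}.} For $H\in\overline\Gamma_2$ we have $|H|^2=(\tr H)^2-2\sigma_2(H)\le(\tr H)^2$, so $|H|\le\tr H$. Hence $|D^2u_\varepsilon|\le\Delta u_\varepsilon$ pointwise. Since $u$ is subharmonic, $\Delta u$ is a nonnegative Radon measure and $\Delta u_\varepsilon\to\Delta u$ weakly. Pairing $D^2u_\varepsilon$ with matrix-valued test fields $\Phi$ satisfying $|\Phi|\le\varphi$ gives
\[
\Bigl|\int D^2u:\Phi\Bigr|\le\int\varphi\,d\Delta u .
\]
So $D^2u$ is a measure with $|D^2u|\le\Delta u$. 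Local $L^1$ bounds on $Du_\varepsilon$ then follow from the interior estimate of the gradient by $\Delta u$ and $u$ (a Green representation), giving $u\in W^{1,1}_{\mathrm{loc}}$.

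\textbf{Step 2: $A,B\in\overline\Gamma_2$ a.e.} Lebesgue differentiation of the measure $D^2u$ shows that $A(x)$ is the a.e.\ limit of $\fint_{B_\rho(x)}dD^2u=\lim_\varepsilon\fint_{B_\rho(x)}D^2u_\varepsilon$. Each such average lies in the closed convex cone $\overline\Gamma_2$, hence so does $A(x)$.

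\textbf{Step 3: the mixed inequality \eqref{eqn2.5}.} This is the main step. The idea is to exploit concavity. The map $(A,B)\mapsto\mathscr B(A,B)$ is the polarization of $\sigma_2$, and on $\overline\Gamma_2$ the function $G$ is concave and 1-homogeneous. For smooth functions, $\mathcal B[u_\varepsilon,v_\varepsilon]=\mathscr B(D^2u_\varepsilon,D^2v_\varepsilon)\,dx$. By weak continuity, $\mathcal B[u_\varepsilon,v_\varepsilon]\to\mathcal B[u,v]$. Hence it suffices to show that the functional $(\mu,\nu)\mapsto\int\varphi\,\mathscr B$ of the measures is weakly lower semicontinuous and that the absolutely continuous parts contribute at least $\mathscr B(A,B)$.

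To do this, I would use the representation $\mathscr B(A,B)\ge2G(A)G(B)$ together with the known fact that $\mathscr B(\cdot,\cdot)$ is the infimum of linear expressions. Concretely,
\[
\mathscr B(A,B)=\inf\bigl\{\,tN(B'):A+t^{-1}N(A'):B\,\bigr\}
\]
over suitable parameters, or more simply, for fixed $B'\in\Gamma_2$ the inequality
\[
\mathscr B(A,B)\ge N(B'):A\cdot\frac{G(B)}{G(B')}\cdots
\]
Since this is delicate, I would first try localization instead. On a small ball, blow up at a Lebesgue point $x$ of the a.c.\ parts where the singular parts have zero density. The rescalings $u_{x,\rho}(y)=\rho^{-2}(u(x+\rho y)-\text{affine})$ converge in $L^1$ to the quadratic $\tfrac12y^TAy$ plus a singular limit with zero density, and similarly for $v$. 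Weak continuity of $\mathcal B$ together with the density of $\mathcal B[u,v]$ at $x$ then gives that the Radon--Nikodym density of $\mathcal B[u,v]$ is $\ge\mathscr B(A,B)$ at $x$.

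The obstacle is that the blow-up limit need not be exactly the quadratic. I would handle this using the convergence in $W^{1,1}$ of $D u_{x,\rho}$ to $Ay$ (the total variation of $D^2u_{x,\rho}-A$ on $B_1$ tends to $0$ at such points), combined with the nonnegativity of the mixed measure of the difference. Expanding
\[
\mathcal B[u_{x,\rho},v_{x,\rho}]=\mathcal B[q_A+w,\,q_B+z]
\]
by bilinearity, the term $\mathcal B[q_A,q_B]=\mathscr B(A,B)\,dy$ is exact. The cross terms $\mathcal B[q_A,z]=N(A):D^2z$ are linear in $D^2z$, and $D^2z\to0$ in total variation. The remaining term $\mathcal B[w,z]$ must be controlled from below; I would choose the decomposition so that $w,z$ stay 2-convex up to small errors, or simply drop it after noting that $\mathcal B[u,v]\ge\mathcal B[q_A,v]+\mathcal B[u,q_B]-\mathcal B[q_A,q_B]$-type inequalities follow from nonnegativity of $\mathcal B[u-q_A,v-q_B]$ when both differences are 2-convex in the limit.

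\textbf{Step 4: the lower bound \eqref{eqn2.6}.} $G(D^2u_j)\ge m$ means $D^2u_j\in K_m=\{H\in\overline\Gamma_2:G(H)\ge m\}$, a closed convex set by concavity of $G$. Averages over balls therefore stay in $K_m$, and the Lebesgue-point argument of Step 2 gives $A\in K_m$ a.e. Here one needs $D^2u_j\to D^2u$ weakly as measures (from $L^1$ convergence) and uses that adding the nonnegative singular part only pushes averages further into $K_m+\overline\Gamma_2\subset K_m$; at points where the singular part has zero density this yields $G(A)\ge m$.
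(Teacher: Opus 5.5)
\textbf{Steps 1, 2 and 4 are sound.} Step 4 is the paper's argument in dual form: you use the closed convex superlevel set $\{H\in\overline\Gamma_2: G(H)\ge m\}$, while the paper uses its supporting half-spaces $DG(Q):H\ge m$ over a countable dense set of $Q\in\Gamma_2$.

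\textbf{There is a genuine gap in Step 3, which is the substance of the lemma.} As written, the step is not closed. Your first idea cannot be formulated: $\mathscr B$ is bilinear, and there is no ``$\mathscr B$ of two measures'' whose lower semicontinuity you could invoke. In the blow-up, you declare that the limit ``need not be exactly the quadratic.'' You then try to bound $\mathcal B[w,z]$, with $w=u_{x,\rho}-q_A$ and $z=v_{x,\rho}-q_B$, from below by the nonnegativity of $\mathcal B[u-q_A,v-q_B]$. That step fails. The mixed measure is nonnegative only when both arguments are $2$-convex, and $u-q_A$ is not: its absolutely continuous Hessian $A(\cdot)-A(x)$ has negative trace wherever $\tr A$ dips below $\tr A(x)$. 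The bilinear extension of $\mathcal B$ to differences of $2$-convex functions carries no sign.

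\textbf{The missing idea is simpler.} The paper mollifies $u$ and $v$ simultaneously. The mollification of a Radon measure converges a.e.\ to the density of its absolutely continuous part, because the singular part has zero Lebesgue density a.e. Hence $D^2u_\varepsilon\to A$ and $D^2v_\varepsilon\to B$ a.e. Since $\mathscr B\ge0$ on $\overline\Gamma_2\times\overline\Gamma_2$, Fatou's lemma and weak continuity give, for $0\le\varphi\in C_c(\Omega)$,
\[
\int\varphi\,\mathscr B(A,B)\,dx\le\liminf_{\varepsilon\to0}\int\varphi\,\mathscr B(D^2u_\varepsilon,D^2v_\varepsilon)\,dx=\int\varphi\,d\mathcal B[u,v].
\]
This is \eqref{eqn2.5}, and no lower semicontinuity of a functional of measures is needed.

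\textbf{Your blow-up can also be completed} with a fact you already state. The convergence $|D^2u_{x,\rho}-A\,dy|(B_R)\to0$ for every $R$ implies, by the Poincar\'e inequality for $BV$ applied to $Du_{x,\rho}-Ay$, that $u_{x,\rho}\to q_A$ in $L^1_{\mathrm{loc}}(\mathbb R^n)$ after subtracting suitable affine functions. So the limit \emph{is} exactly the quadratic. Three further facts then finish the argument: weak continuity of $\mathcal B$, its invariance under adding affine functions, and the scaling $\mathcal B[u_{x,\rho},v_{x,\rho}](E)=\rho^{-n}\mathcal B[u,v](x+\rho E)$. Together they show directly that the absolutely continuous density of $\mathcal B[u,v]$ \emph{equals} $\mathscr B(A,B)$ a.e., which is stronger than \eqref{eqn2.5}, and no bilinear expansion is needed. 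The price is this blow-up machinery, compared with the short Fatou argument.
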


\begin{proof}
By \cite[Lemma~2.3]{TW2}, the local convolutions $u_\varepsilon$
are $2$-convex and satisfy $|D^2u_\varepsilon|\leq\Delta u_\varepsilon$.
Since $u$ is subharmonic, $\Delta u$ is a nonnegative Radon measure,
so these Hessians have uniformly bounded variation on compact subsets.
Passing to measures proves $|D^2u|\leq\Delta u$; the Sobolev assertion
in~\eqref{eqn2.4} follows from \cite[Theorem~4.1]{TW2}.
Lebesgue differentiation gives $A,B\in\overline\Gamma_2$ almost everywhere.

Let $u_\varepsilon,v_\varepsilon$ be simultaneous local convolutions. Their Hessians converge almost everywhere to $A,B$, respectively. Since $\mathscr B(D^2u_\varepsilon,D^2v_\varepsilon)\geq0$, Fatou's lemma and weak continuity of \eqref{eqn2.2}, tested against a nonnegative compactly supported function, give \eqref{eqn2.5}.

For each fixed $Q\in\Gamma_2$, concavity gives
\[
DG(Q):D^2u_j\geq G(D^2u_j)\geq m.
\]
Passing to distributions and then taking the absolutely continuous part yields $DG(Q):A\geq m$. These inequalities hold simultaneously for a countable dense subset of $\Gamma_2$. The supporting-plane representation
\[
G(A)=\inf_{Q\in\Gamma_2}DG(Q):A\qquad(A\in\overline\Gamma_2)
\]
proves \eqref{eqn2.6}. This representation follows from concavity; for the reverse inequality take $Q=A+\varepsilon I$ and let $\varepsilon\downarrow0$.
\end{proof}

In particular, $u\in W^{2,1}_{\mathrm{loc}}(\Omega)$ if and only if
$(D^2u)_{\sing}=0$. Thus the absolute continuity of $\mu_2[u]$ in
\eqref{eqn1.3} concerns the scalar Hessian measure, whereas the
conclusion of Theorem~\ref{thm1.1} concerns the distributional Hessian.

We also use the quadratic comparison identity of Chen, Zhou, and
Zhu~\cite[Proposition~3.8]{CZZ}. If $D$ is a ball and $a,v$ are
smooth admissible functions on $\overline D$ with $a=v$ on
$\partial D$, then
\begin{equation}\label{eqn2.8}
\int_D\mathscr D(D^2a,D^2v)
+\frac12\int_{\partial D}H_{\partial D}(a-v)_\nu^2
=\int_D\bigl(G(D^2a)-G(D^2v)\bigr)^2,
\end{equation}
where $\nu$ is the outward unit normal,
$\mathrm{II}_{\partial D}(X,Y)=D_X\nu\cdot Y$ is the second fundamental
form, and $H_{\partial D}=\tr\mathrm{II}_{\partial D}$.
Thus $\mathrm{II}_{\partial D}$ is positive on spheres.
Indeed, for $w=a-v$, polarization gives
\[
\mathscr D(D^2a,D^2v)
=\bigl(G(D^2a)-G(D^2v)\bigr)^2-\sigma_2(D^2w).
\]
The Newton tensor is divergence free, so
\[
2\int_D\sigma_2(D^2w)
=\int_{\partial D}N(D^2w)Dw\cdot\nu
=\int_{\partial D}H_{\partial D}w_\nu^2.
\]
Here $D^2_{\tan}w$ denotes the restriction of the Euclidean Hessian
to the tangent space of $\partial D$. Since $w=0$ on $\partial D$,
we have $Dw=w_\nu\nu$ and $D^2_{\tan}w=w_\nu\mathrm{II}_{\partial D}$
there. In particular, the boundary term in~\eqref{eqn2.8} is nonnegative.

\begin{lemma}\label{thm2.4}
Let $u\in C^0(\Omega)\cap\Phi_2(\Omega)$ satisfy \eqref{eqn1.3}. Fix a ball $D\Subset\Omega$ and write $\mu_2[u]=g^2\,dx$, with $m=\sqrt\lambda\leq g\leq M=\sqrt\Lambda$. There are smooth admissible functions $u_j$ on $\overline D$ such that
\begin{equation}\label{eqn2.9}
\begin{gathered}
G(D^2u_j)=g_j,\qquad m\leq g_j\leq M,\\
g_j\longrightarrow g\text{ in }L^q(D)\quad(1\leq q<\infty),\\
u_j\longrightarrow u\text{ uniformly on }\overline D.
\end{gathered}
\end{equation}
Moreover, the viscosity inequalities \eqref{eqn1.4} imply \eqref{eqn1.3}.
\end{lemma}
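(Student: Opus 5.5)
We build $u_j$ by solving Dirichlet problems on $D$ with mollified data, and then identify the limit through the comparison principle for Hessian measures. Choose $g_j\in C^\infty(\overline D)$ with $m\le g_j\le M$ and $g_j\to g$ in $L^q(D)$ for all $q<\infty$; for instance, mollify $g$, extended by $m$ outside $D$. This is possible because $g$ is bounded and $L^1$ convergence combined with a uniform bound gives $L^q$ convergence. Next choose smooth boundary data $\varphi_j$ on $\partial D$ with $\varphi_j\to u$ uniformly; $u$ is continuous on $\overline D\subset\Omega$, so this is fine.

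By the classical Dirichlet theory for $\sigma_2$ on uniformly convex domains (Caffarelli--Nirenberg--Spruck, Trudinger), there is a unique admissible $u_j\in C^\infty(\overline D)$ solving $\sigma_2(D^2u_j)=g_j^2$ in $D$ with $u_j=\varphi_j$ on $\partial D$. Equivalently, $G(D^2u_j)=g_j$. Smoothness up to the boundary holds because $g_j>0$ is smooth and $D$ is a ball.

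The main step is uniform convergence $u_j\to u$, and this is where the comparison principle for Hessian measures is used. Let $w$ be the unique continuous $2$-convex solution of $\mu_2[w]=g^2\,dx$ in $D$ with $w=u$ on $\partial D$. Uniqueness comes from the Trudinger--Wang comparison principle for measures: if $\mu_2[v_1]\ge\mu_2[v_2]$ and $v_1\le v_2$ on $\partial D$, then $v_1\le v_2$. Since $u$ itself is such a solution, $w=u$. To compare $u_j$ with $u$, we perturb with a quadratic barrier. Let $h_j$ solve $\mu_2[h_j]=|g_j^2-g^2|\,dx$ with zero boundary data. Since the right-hand side tends to $0$ in $L^q$ for some $q>n/2$, the Trudinger--Wang $L^\infty$ estimate (an Aleksandrov-type bound for $2$-convex functions with $L^q$ density, $q>n/2$) gives $\|h_j\|_\infty\to0$. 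Using superadditivity of $\mu_2$ (from $\mathcal B\ge0$), $u+h_j-\sup|\varphi_j-u|$ is a subsolution relative to $u_j$. The matching supersolution comparison is delicate: $\mu_2$ is superadditive, not subadditive. So we instead use compactness. The family $u_j$ is locally uniformly bounded and equicontinuous: local Lipschitz bounds for $2$-convex functions hold once $L^\infty$ bounds are known, and boundary equicontinuity comes from barriers built from the uniformly converging data and the bound $g_j\le M$. Any subsequential uniform limit $v$ is $2$-convex with $\mu_2[v]=g^2dx$, by the weak continuity of $\mu_2$ together with $g_j^2\to g^2$ in $L^1$, and $v=u$ on $\partial D$. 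Uniqueness then gives $v=u$, so the whole sequence converges. We expect the boundary equicontinuity to be the main obstacle. We would handle it by trapping $u_j$ between $\varphi_j$-harmonic extensions (upper barrier, since $2$-convex functions are subharmonic) and $\varphi_j$-extensions minus $C(M)(R^2-|x-x_D|^2)$ plus a modulus of continuity of the data (lower barrier).

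For the final claim, suppose $u$ satisfies \eqref{eqn1.4} in the viscosity sense. Then $u$ is $2$-convex. Approximate it locally by solutions $u^\varepsilon$ of smooth Dirichlet problems on balls; by viscosity comparison, $\lambda\le\sigma_2\le\Lambda$ in the viscosity sense bounds $\mu_2[u]$ between $\lambda\,dx$ and $\Lambda\,dx$. More directly, compare $u$ on a small ball with the smooth solutions of $\sigma_2=\lambda$ and $\sigma_2=\Lambda$ sharing its boundary values, which sandwich $u$. Then the comparison principle for measures together with weak continuity shows $\lambda\,dx\le\mu_2[u]\le\Lambda\,dx$ on every ball, which gives \eqref{eqn1.3}.
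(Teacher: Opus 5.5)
Your approximation scheme (mollified densities, smooth Dirichlet solutions, uniform bounds, Arzel\`a--Ascoli, identification of the limit by the Trudinger--Wang comparison principle) is the paper's, but the interior equicontinuity step is wrong as stated. It is false that $2$-convex functions have local Lipschitz bounds once $L^\infty$ bounds are known. For $n=3$, $|x|^{1/2}$ is $2$-convex: its Hessian has eigenvalues $-\frac14|x|^{-3/2}$ and $\frac12|x|^{-3/2}$ (twice), so $\sigma_2=0$ with positive trace, and no $C^2$ function touches it from above at $0$. Yet it is not Lipschitz at the origin. For $n\ge4$, boundedness gives no modulus of continuity at all. Take $\Gamma=\log|x|$ when $n=4$ and $\Gamma=-|x|^{2-n/2}$ when $n\ge5$. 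Then the functions $N^{-1}\max\{\Gamma,-N\}$ are $2$-convex, take values in $[-1,0]$ on $B_1$, and oscillate by $1/2$ on balls whose radii tend to $0$.

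What rules this out for the $u_j$ is the upper bound $G(D^2u_j)\le M$, which you use only at the boundary. The paper uses it in the interior through the Trudinger--Wang interior H\"older estimate for $2$-convex functions whose Hessian measure has an $L^q$ density (taking $q=n$) when $n\ge4$. For $n=3$ one has $2>n/2$, and Trudinger--Wang give a $C^{0,1/2}$ bound from the amplitude alone; for $n=2$, convexity gives Lipschitz bounds. Incidentally, the direct comparison you abandoned does close. Superadditivity also gives $\mu_2[u_j+h_j]\ge(g_j^2+|g_j^2-g^2|)\,dx\ge\mu_2[u]$, so $u_j+h_j-\sup_{\partial D}|\varphi_j-u|\le u$. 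The correction goes on $u_j$ rather than on $u$, and no subadditivity is needed. That route replaces compactness by the Trudinger--Wang solvability and $L^q$--$L^\infty$ estimate for $h_j$.

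The viscosity part has a real gap. Since $u|_{\partial B}$ is only continuous, the comparison functions cannot be smooth solutions with those boundary values. They must be continuous Hessian-measure solutions $v_c$ of $\mu_2[v_c]=c\,dx$, $c\in\{\lambda,\Lambda\}$, built by the first part with constant density. You also need viscosity stability of these uniform limits in order to apply viscosity comparison and obtain $v_\Lambda\le u\le v_\lambda$. More importantly, neither tool you name turns this sandwich into bounds on $\mu_2[u]$. The comparison principle for measures goes in the opposite direction, from an ordering of measures to an ordering of functions, and weak continuity does not help either.

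The missing ingredient is the Trudinger--Wang total-mass comparison: if $w_1\le w_2$ in $B$ with $w_1=w_2$ on $\partial B$, then $\mu_2[w_1](B)\ge\mu_2[w_2](B)$. This yields $\lambda|B|\le\mu_2[u](B)\le\Lambda|B|$ for every ball $B\Subset\Omega$, hence $\lambda\,dx\le\mu_2[u]\le\Lambda\,dx$ by differentiation of measures.

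A smaller point concerns your lower boundary barrier. It should be built pointwise at each $y\in\partial D$: an affine function encoding the modulus of $u|_{\partial D}$ at $y$, plus a fixed quadratic $K(|x-x_D|^2-R^2)$ whose Hessian has $G\ge M$. It should not come from an extension of $\varphi_j$, whose Hessian is not controlled uniformly in $j$.
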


\begin{proof}
Extend $g$ measurably beyond $D$ with values in $[m,M]$, and take smooth convolutions $g_j$. Choose smooth boundary data $\varphi_j\to u|_{\partial D}$ uniformly. The smooth Dirichlet theory \cite{CNS} supplies admissible solutions of $\sigma_2(D^2u_j)=g_j^2$ with boundary values $\varphi_j$. Fixed quadratic and constant barriers give a common amplitude bound.
For every $D'\Subset D$, the interior estimates
\cite[(6.4) and Theorem~2.7]{TW2} for $n\geq4$ and $n=3$,
respectively, and local Lipschitz bounds for convex functions when
$n=2$, give uniform $C^{0,1/2}(D')$ bounds depending only on $n$,
the amplitude bound, $M$, and the fixed domains. In (6.4), take the
density exponent $q=n$ and $\alpha=1/2$.

The boundary modulus is also uniform. Write $D=B_R(x_D)$ and, for $y\in\partial D$, set $\ell_y(x)=R-(y-x_D)\cdot(x-x_D)/R$. This affine function is nonnegative on $\overline D$ and equals $|x-y|^2/(2R)$ on $\partial D$. Given $\varepsilon>0$, uniform convergence of the boundary data permits a constant $C_\varepsilon$, independent of large $j$ and of $y$, such that on $\partial D$
\[
u(y)-2\varepsilon-C_\varepsilon\ell_y
\leq\varphi_j\leq
u(y)+2\varepsilon+C_\varepsilon\ell_y.
\]
Add $K(|x-x_D|^2-R^2)$ to the lower barrier, where $K$ is fixed so that its Hessian satisfies $G\geq M$. The upper barrier is affine. Comparison gives a uniform boundary modulus of continuity. By weak continuity, every uniform subsequential limit has Hessian
measure $g^2\,dx$ and boundary values $u$. The comparison principle for continuous Hessian-measure solutions \cite[Theorem~3.1]{TW1} identifies this limit with $u$, proving \eqref{eqn2.9}.

Suppose now that \eqref{eqn1.4} holds. For every ball $B\Subset\Omega$,
the same smooth Dirichlet construction and compactness argument, with
constant density, give $v_c\in C^0(\overline B)\cap\Phi_2(B)$ satisfying
\[
\mu_2[v_c]=c\,dx\quad\text{in }B,\qquad v_c=u\quad\text{on }\partial B,
\qquad c\in\{\lambda,\Lambda\}.
\]
The smooth solutions converge uniformly on $\overline B$, so viscosity
stability also makes $v_c$ a viscosity solution. Viscosity comparison gives
$v_\Lambda\leq u\leq v_\lambda$.
The total-mass comparison principle \cite[Corollary~2.4]{TW1} gives
\[
\lambda|B|\leq\mu_2[u](B)\leq\Lambda|B|
\quad\text{for every ball }B\Subset\Omega.
\]
Hence $\lambda\,dx\leq\mu_2[u]\leq\Lambda\,dx$, which is equivalent to
\eqref{eqn1.3}.
\end{proof}

\subsection{An auxiliary Dirichlet problem}\label{sec-auxiliary}

We first give a trace-dependent version of the quotient calculation
in Guan and Zhang~\cite[Proposition~2.2]{GZ}.
Let $h\in C^2((0,\infty))$ be positive and satisfy
\begin{equation}\label{eqn3.3}
h^2-2shh'>0,\qquad (h-sh')^2+s^2hh''>0
\quad\text{for every }s>0.
\end{equation}
For $s=\tr H>0$ and $\phi(s)=h(s)^2/s$, set
\[
F(H)=\frac{\sigma_2(H)-h(s)^2}{s}
=\frac s2-\frac{|H|^2}{2s}-\phi(s).
\]
Direct differentiation gives
\begin{equation}\label{eqn3.4}
D^2F(H)[Z,Z]
=-\frac1s\left|Z-\frac{\tr Z}{s}H\right|^2
-\phi''(s)(\tr Z)^2,
\end{equation}
where
\[
\phi''(s)=\frac2{s^3}\bigl((h-sh')^2+s^2hh''\bigr)>0.
\]
For every unit vector $\xi$,
\begin{equation}\label{eqn3.5}
DF(H)[\xi\otimes\xi]
=\frac12|H/s-\xi\otimes\xi|^2
+\frac{h^2-2shh'}{s^2}>0.
\end{equation}
Thus $F$ is strictly concave and elliptic on $\{\tr H>0\}$.
Its zeros lie in $\Gamma_2$ and satisfy $G(H)=h(\tr H)$.

Fix $0<m\leq M$. Choose a nondecreasing
$\chi\in C^\infty(\mathbb R)$ with $0\leq\chi\leq1$,
$\chi=0$ on $(-\infty,0]$, and $\chi=1$ on $[1,\infty)$.
For a constant $\ell_\eta\geq1$ to be chosen, set
\begin{equation}\label{eqn3.1}
\eta(s)=\chi\bigl((\log s)/\ell_\eta\bigr)\quad(s>0),
\qquad\eta(0)=0,\qquad\tau=e^{\ell_\eta}.
\end{equation}
Then $\eta=0$ on $[0,1]$, $\eta=1$ on $[\tau,\infty)$, and
\begin{equation}\label{eqn3.2}
\eta(s)^2\leq\min\{s,1\},\qquad
\sup_{s>0}\bigl(|s\eta'(s)|+|s^2\eta''(s)|\bigr)
\leq C/\ell_\eta.
\end{equation}
For $g,b\in[m,M]$, $t,z\in[0,1]$, and $P>0$, define
\[
h(s)=g-tz(g-b)\eta(s/P).
\]
We have $m\leq h\leq M$ and
$|sh'|+|s^2h''|\leq C(M-m)/\ell_\eta$.
Taking $\ell_\eta=\ell_\eta(m,M)$ sufficiently large therefore gives
\[
h^2-2shh'\geq m^2/2,\qquad
(h-sh')^2+s^2hh''\geq m^2/4.
\]
Hence~\eqref{eqn3.3} holds uniformly in $g,b,t,z,P$.
We fix this choice of $\eta$ and $\tau$ throughout this section.

The following variant of the Dirichlet construction of Chen, Zhou,
and Zhu~\cite[Theorem~3.1]{CZZ} permits sign-changing $g-b$ and
uses the logarithmic cutoff~\eqref{eqn3.1}.

\begin{proposition}\label{thm3.2}
Let $D=B_R(x_D)$, let $u,g,b\in C^\infty(\overline D)$, and suppose that
\[
D^2u\in\Gamma_2,\qquad G(D^2u)=g,\qquad m\le g,b\le M
\quad\text{in }\overline D.
\]
Let $\zeta\in C_c^\infty(D)$ satisfy $0\le\zeta\le1$, and let $\eta$
be the cutoff constructed above. For every $P>0$, there is a unique
$v\in C^\infty(\overline D)$ with $D^2v\in\Gamma_2$ satisfying
\begin{equation}\label{eqn3.6}
G(D^2v)+\zeta(g-b)\eta(\Delta v/P)=g\quad\text{in }D,
\qquad v=u\quad\text{on }\partial D.
\end{equation}
Moreover,
\begin{equation}\label{eqn3.7}
\|v\|_{L^\infty(D)}\le \|u\|_{L^\infty(D)}+C R^2,
\end{equation}
where $C$ depends only on $n,m,M$.
\end{proposition}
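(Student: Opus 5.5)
We rewrite \eqref{eqn3.6} as a concave elliptic equation and solve it by the continuity method together with the Caffarelli--Nirenberg--Spruck theory \cite{CNS}. For $x\in\overline D$ set
\[
h_x(s)=g(x)-\zeta(x)\bigl(g(x)-b(x)\bigr)\eta(s/P).
\]
With $t=1$ and $z=\zeta(x)\in[0,1]$, this is exactly the function $h$ of the preceding paragraph. Hence \eqref{eqn3.3} holds uniformly in $x$, and
\[
F_x(H)=\frac{\sigma_2(H)-h_x(\tr H)^2}{\tr H}
\]
is strictly concave and uniformly elliptic on compact subsets of $\{\tr H>0\}$, by \eqref{eqn3.4} and \eqref{eqn3.5}. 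Its zero set consists of the matrices $H\in\Gamma_2$ with $G(H)=h_x(\tr H)$. So, for admissible $v$, \eqref{eqn3.6} is equivalent to $F_x(D^2v)=0$. To run the continuity method, embed the problem in the family obtained by replacing $\zeta$ with $t\zeta$, $t\in[0,1]$; the estimates above are uniform in $t$. At $t=0$ the equation reads $G(D^2v)=g$ with $v=u$ on $\partial D$, and $v=u$ solves it.

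\textbf{Openness and uniqueness.} The linearization of $F_x$ at a solution is uniformly elliptic by \eqref{eqn3.5}, and it has no zeroth-order term. The implicit function theorem in $C^{2,\alpha}$ therefore gives openness. Uniqueness follows from the comparison principle for $F_x(D^2v)=0$. Indeed, if $v_1,v_2$ are two solutions, the difference $F_x(D^2v_1)-F_x(D^2v_2)$ is a linear elliptic operator applied to $v_1-v_2$; this uses the segment between $D^2v_1$ and $D^2v_2$, which stays in $\{\tr>0\}$ by convexity of that set. The maximum principle then forces $v_1=v_2$.

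\textbf{Closedness and \eqref{eqn3.7}.} Closedness requires uniform a priori $C^{2,\alpha}(\overline D)$ bounds, after which Evans--Krylov applies since $F_x$ is concave.

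For the $C^0$ bound, we use $m\le G(D^2v)\le M$. Since $v$ is subharmonic, $v\le\max_{\partial D}u$. For the lower bound, let $w=u+K(|x-x_D|^2-R^2)$ with $K=K(n,m,M)$ chosen so that $G(D^2w)\ge G(2KI)\ge M$; this holds by concavity and homogeneity of $G$, since $D^2u\in\Gamma_2$. Then $G(D^2w)\ge M\ge G(D^2v)$ with $w=v$ on $\partial D$, and comparison for $G$ gives $w\le v$. This yields \eqref{eqn3.7} with $C=K$.

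The gradient bound follows from boundary barriers ($w$ below and the harmonic lift of $u$ above) together with the maximum principle for $|Dv|^2$, after differentiating the equation; the $x$-dependence of $F_x$ enters only through smooth terms.

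The boundary $C^2$ estimate follows \cite{CNS} for the uniformly convex domain $D$. The tangential–tangential second derivatives come from the boundary data. For the mixed derivatives we use the barrier built from $w-v$ and the distance function. The double normal derivative uses the fact that the limiting equation in the normal direction is strictly elliptic, with $h\ge m$.

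\textbf{Main obstacle: the interior $C^2$ bound.} We plan to apply the maximum principle to $\log\Delta v+A|Dv|^2$ (or to $\log\lambda_{\max}$). Concavity of $F_x$ (\eqref{eqn3.4}, with $\phi''>0$) disposes of the third-derivative terms. The $x$-derivatives of $h_x$ contribute errors bounded by $C\|g,b,\zeta\|_{C^2}$. The terms coming from $\eta(\Delta v/P)$ are controlled by $|s\eta'|+|s^2\eta''|\le C/\ell_\eta$, so all bounds are uniform in the relevant quantities.

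The subtle point is that $\tr D^2v$ must stay away from $0$ to keep $F_x$ uniformly elliptic. This follows from $\tr H\ge 2G(H)/\sqrt{2n/(n-1)}\ge c\,m$ on the zero set, because $h\ge m$. This gives two-sided ellipticity once $\Delta v$ is bounded above, and closes the argument.
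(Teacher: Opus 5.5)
Your outline has the same architecture as the paper's proof. Both use the continuity path $t\zeta$ for the quotient operator $F$, barriers for the $C^0$ and gradient bounds, boundary second derivatives, an interior maximum principle, Evans--Krylov, and uniqueness along the segment of Hessians. For the boundary step you should state why citing CNS is legitimate: $\zeta$ vanishes in a collar of $\partial D$, and there the equation is exactly $G(D^2v)=g$. Your $C^0$ argument (subharmonicity of $v$ plus the subsolution $u+K(|x-x_D|^2-R^2)$) is a harmless simplification of the paper's $\ell\le v_t\le w_0$.

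The genuine difference is the interior Hessian bound. The paper maximizes $v_{\xi\xi}+C(\underline u-v)$ and splits into cases. Where $\Delta v<\tau P$ nothing is needed. Where $\Delta v\ge\tau P$ the cutoff is constant, so the equation is $G(D^2v)=\widehat h_t(x)$ to second order, and concavity of $G$ with the strict barrier $L_t(\underline u-v)\ge\delta$ finishes. Your $\log\Delta v+A|Dv|^2$ handles the trace-dependent operator directly. It does close, but not only for the reason you give. With $s=\Delta v$:
the concavity term $-\phi''(\tr Z)^2$ in \eqref{eqn3.4}, with $\phi''\ge m^2/(2s^3)$, absorbs the mixed term $2\partial_s(\partial_{x_k}F)\,s_k=O(|Ds|/s^2)$.
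The term $-F^{ij}s_is_j/s^2$ from the logarithm is not a concavity matter. The critical-point equation turns it into $-4A^2\sum_iF^{ii}v_i^2\lambda_i^2$, which you absorb by choosing $4A\sup|Dv|^2\le1$.
The positive term giving the contradiction is $A\sum_iF^{ii}\lambda_i^2\ge Am^2/(2n)$. It comes from the summand $(h^2-2shh')/s^2\ge m^2/(2s^2)$ in \eqref{eqn3.5} together with $|D^2v|^2\ge s^2/n$.
All remaining errors are $O(1/s)$, because $\partial_xF=-2h\,\partial_xh/s$.
The paper's route is shorter; yours avoids the case split at $\Delta v=\tau P$.

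Two steps need repair. First, the gradient bound: $F^{ij}\partial_{ij}|Dv|^2=2\sum_iF^{ii}\lambda_i^2-2v_k\partial_{x_k}F$ is only bounded below by $m^2/n-C|Dv|$, so a maximum principle for $|Dv|^2$ does not close. Maximize $\pm v_k+C(\underline u-v)$ instead, with a strict subsolution $\underline u$, as the paper does; $F^{ij}\partial_{ij}v_k=-\partial_{x_k}F$ is bounded.

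Second, and more seriously, your stated reason for the double normal bound does not work. On $\partial D$, in a tangential--normal frame, $\sigma_2(D^2v)=r\,\tr B+\sigma_2(B)-|p|^2$. An upper bound on $r$ therefore needs a uniform positive lower bound on the tangential trace $\tr B$. The inequality $h\ge m$ only bounds $r$ from below. Your upper barrier, the harmonic lift $H$, gives $B\ge D^2_{\tan}H$, whose trace $-H_{\nu\nu}$ has no sign. You can either carry out CNS's full double-normal argument in the collar, or use the paper's device:
the strictly admissible solution $w_0$ of $G(D^2w_0)=c_*<m$ with $w_0=u$ on $\partial D$ is a supersolution because $h\ge m$, so $v\le w_0$;
hence $(v-w_0)_\nu\ge0$ and $B\ge D^2_{\tan}w_0$;
since $\tr D^2_{\tan}w_0=N(D^2w_0)(\nu,\nu)\ge c_0>0$, the identity $r=(g^2-\sigma_2(B)+|p|^2)/\tr B$ gives the bound.
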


\begin{proof}
Fix $0<c_*<m$. Since $D$ is a ball, the classical Dirichlet
theorem~\cite[Theorem~3]{CNS} gives a smooth admissible function
$w_0$ satisfying
\[
G(D^2w_0)=c_*\quad\text{in }D,\qquad w_0=u\quad\text{on }\partial D.
\]
Put
\[
q(x)=\tfrac12(|x-x_D|^2-R^2),\qquad
h_t(x,s)=g(x)-t\zeta(x)(g(x)-b(x))\eta(s/P)
\]
and
\[
F_t(x,H)=\frac{\sigma_2(H)-h_t(x,\tr H)^2}
{\tr H},\qquad 0\le t\le1.
\]
The preceding calculation makes $F_t$ smooth, concave, and elliptic on
$\{\tr H>0\}$, uniformly in $t$ on compact matrix sets, and its
zeros are admissible. We solve $F_t(x,D^2v_t)=0$ with boundary
value $u$, starting from $v_0=u$.

Choose $\kappa_0=\kappa_0(n,m,M)>0$ such that $m+\kappa_0G(I)>M$, and set $\ell=u+\kappa_0q$. The concavity and homogeneity of $G$ imply
\[
G(D^2\ell)\ge G(D^2u)+\kappa_0G(I)>M.
\]
Consequently $F_t(x,D^2\ell)>0$ and $F_t(x,D^2w_0)<0$. Comparison gives
\begin{equation}\label{eqn3.8}
\ell\le v_t\le w_0.
\end{equation}
Since $w_0$ is subharmonic, \eqref{eqn3.7} follows. In the rest of this proof, constants may depend on the fixed smooth data and $P$, but not on $t$.

Take $\kappa>\kappa_0$ and put $\underline u=u+\kappa q$. By compactness,
\[
\delta:=\min_{\overline D\times[0,1]}F_t(x,D^2\underline u)>0.
\]
For $L_t=F_t^{ij}(x,D^2v_t)\partial_{ij}$, concavity yields
\begin{equation}\label{eqn3.9}
L_t(\underline u-v_t)\ge\delta.
\end{equation}
Differentiating the equation, with $\partial_{x_k}h_t$ taken at fixed $s$, gives
\[
L_t(v_t)_k=
\frac{2h_t(x,\Delta v_t)\partial_{x_k}h_t(x,\Delta v_t)}{\Delta v_t}.
\]
Since $\Delta v_t\ge c_nm$, this expression is bounded in terms of the fixed smooth data. On $\partial D$, the comparison \eqref{eqn3.8} implies, for the outward normal $\nu$,
\[
(w_0)_\nu\le(v_t)_\nu\le\ell_\nu,
\]
while tangential derivatives are prescribed. The maximum principle applied to $\pm(v_t)_k+C(\underline u-v_t)$, followed by \eqref{eqn3.7}, therefore gives a uniform gradient bound.

We next establish the boundary Hessian bound. Choose a fixed collar of $\partial D$ on which $\zeta=0$. There $G(D^2v_t)=g$. Tangential differentiation of the boundary values controls the tangential--tangential derivatives. To bound mixed derivatives, let $T$ be an infinitesimal rotation about $x_D$, and set
\[
\varphi=T(v_t-u),\qquad W=v_t-\underline u,
\qquad L=G^{ij}(D^2v_t)\partial_{ij}.
\]
Orthogonal invariance gives $L(Tv_t)=Tg$. The functions $u$ and $v_t$ solve the same equation in the collar. Concavity therefore gives
\[
LW\le-\kappa\sum_iG^{ii},\qquad
|L\varphi|\le C\left(1+\sum_iG^{ii}\right).
\]
Newton's inequality gives $\sum_iG^{ii}\ge c_n>0$. On the outer edge of the collar, $\varphi=W=0$. On a fixed inner edge, the gradient estimate bounds $\varphi$, whereas \eqref{eqn3.8} implies
\[
W\ge(\kappa-\kappa_0)(-q)>0.
\]
For sufficiently large $C$, one has $L(\pm\varphi-CW)\geq0$ in the collar and $\pm\varphi-CW\leq0$ on its boundary. Thus $|\varphi|\le CW$ in the collar. Dividing by the inward distance at $\partial D$ bounds all tangential--normal derivatives; the derivatives of the rotation coefficients involve only the already bounded first derivatives.

For the remaining derivative, use an orthonormal tangential--normal frame and write
\[
D^2v_t=\begin{pmatrix}B_t&p_t\\p_t^T&r_t\end{pmatrix}
\quad\text{on }\partial D.
\]
The function $z=v_t-w_0$ is nonpositive and vanishes on $\partial D$, so $z_\nu\ge0$. With the second fundamental form of the sphere taken positive, tangential differentiation gives $D^2_{\tan}z=z_\nu\mathrm{II}\ge0$. Hence
\[
B_t\ge B_0:=D^2_{\tan}w_0,\qquad
\tr B_t\ge\tr B_0
=\bigl[(\Delta w_0)I-D^2w_0\bigr](\nu,\nu)\ge c_0>0.
\]
Here $c_0>0$ because $w_0$ is smooth and strictly admissible on $\overline D$. The block identity for $\sigma_2$ now gives
\[
r_t=\frac{g^2-\sigma_2(B_t)+|p_t|^2}{\tr B_t},
\]
which proves the full boundary Hessian bound.

For the global Hessian bound, fix a unit vector $\xi$ and maximize
\[
Q_\xi=(v_t)_{\xi\xi}+C(\underline u-v_t).
\]
Boundary maxima are controlled by the preceding estimates. At an interior maximum where $\Delta v_t<\tau P$, the cone inequality $|D^2v_t|\le\Delta v_t$ and $\underline u\le v_t$ bound $Q_\xi$ directly. At every other interior maximum, the cutoff equals one. Put $\widehat h_t=g-t\zeta(g-b)$. At this point, through second order,
\[
G(D^2v_t)=\widehat h_t,
\qquad L_t=\frac{2\widehat h_t}{\Delta v_t}\,
G^{ij}(D^2v_t)\partial_{ij}.
\]
These identities also hold at $\Delta v_t=\tau P$, since the cutoff is smooth and constant on $[\tau,\infty)$. Twice differentiating and using the concavity of $G$ gives
\[
L_t(v_t)_{\xi\xi}
\ge-\frac{2\widehat h_t}{\Delta v_t}\|D^2\widehat h_t\|_{L^\infty(D)}
\ge-C_1.
\]
Together with \eqref{eqn3.9}, this contradicts $L_tQ_\xi\le0$ when $C\delta>C_1$. The amplitude bound then controls all directional second derivatives from above, hence the trace and, by admissibility, the full Hessian.

The Hessian bound and $G(D^2v_t)\ge m$ confine $D^2v_t$ to a
fixed compact subset $K\Subset\Gamma_2$, where $F_t$ is uniformly
elliptic. The infimum of its supporting affine functions at matrices
in $K$ gives a concave, uniformly elliptic extension to $\Sym$
agreeing with $F_t$ on $K$;
its H\"older variation in $x$ grows at most linearly in the matrix norm.
Interior Evans--Krylov estimates~\cite{CC} and the boundary estimates
for $G(D^2v_t)=g$ in the fixed collar~\cite[Section~4]{CNS} give a
uniform $C^{2,\alpha}(\overline D)$ bound for some $\alpha>0$.
Schauder estimates give higher regularity.
For $0<\beta<\alpha$, compactness in $C^{2,\beta}(\overline D)$
proves closedness, and the implicit function theorem with linear
Dirichlet Schauder theory gives openness.

Finally, the segment joining the Hessians of two admissible solutions has positive trace. Integrating the elliptic linearization of $F_1$ along this segment and applying the maximum principle to the difference proves uniqueness.
\end{proof}

\subsection{A localized trace estimate}

The following estimate uses the maximum principle calculations of
Chou and Wang~\cite{CW}.

\begin{lemma}\label{wp-weighted-bound}
Let $V\Subset V'\Subset D$ be fixed, and let $W,v$ be smooth and
admissible in $D$. Suppose that $\mathcal O=\{\rho>0\}\Subset V$,
where $\rho=W-v-\delta_0$ for a constant $\delta_0>0$, and that,
for some $0<m\leq M$ and $A>0$,
\[
m\leq G(D^2v)\leq M,\qquad
\|W\|_{C^2(V')}+[\rho]_{C^{0,1/2}(V')}
+\|\rho_+\|_{L^\infty(V')}\leq A.
\]
Let $a\geq12$, $P\geq1$, and let $b$ be smooth with $m\leq b\leq M$.
Assume that, for every multi-index $\beta$ with $|\beta|\leq2$,
\[
D^\beta\bigl(\sigma_2(D^2v)-b^2\bigr)=0
\quad\text{on }\{\rho>0,\ \rho^a\Delta v\geq2P\}.
\]
Set
\begin{equation}\label{wp-starting-threshold}
L=\|D(b^2)\|_{L^\infty(V')},\quad
J=\|D^2(b^2)\|_{L^\infty(V')},\quad P_b=1+L^2+J.
\end{equation}
If $P\geq P_b$, then
\begin{equation}\label{wp-weighted-estimates}
\sup_{\mathcal O}\rho^{a-1}|Dv|^2\leq CP,
\qquad \sup_{\mathcal O}\rho^a\Delta v\leq CP,
\end{equation}
where $C$ depends only on $n,m,M,a,A$ and the fixed domains.
\end{lemma}

\begin{proof}
There is nothing to prove if $\mathcal O=\varnothing$.
Fix $x\in\mathcal O$ and let $r=\rho(x)$. The H\"older bound gives
a ball $B_\ell(x)\Subset V'$ such that
\begin{equation}\label{wp-holder-ball}
r/2\leq\rho\leq3r/2\quad\text{on }B_\ell(x),\qquad
\ell=\min\{\ell_*,c r^2\}.
\end{equation}
Set $w=v-W+\delta_0=-\rho$,
$\chi(y)=1-|y-x|^2/\ell^2$, and $\Phi(w)=(2r-w)^{-1/4}$.
Maximize $\chi\Phi(w)v_\xi$ over $B_\ell(x)\times\mathbb S^{n-1}$.
At a positive maximum, choose coordinates such that $Dv=pe_1$, $p>0$, and put
$Q=\chi p$. The first derivative identity is
\begin{equation}\label{wp-gradient-first}
\frac{v_{1i}}p+\frac{\Phi'}\Phi(v_i-W_i)+\frac{\chi_i}\chi=0.
\end{equation}
If $Q\leq C(\|DW\|_\infty+r/\ell)$, then
$r^{a-1}Q^2\leq C$, since $a\geq3$ and $r$ is bounded.
Otherwise~\eqref{wp-gradient-first} gives $v_{11}\leq-cp^2/r$.
Writing $N=N(D^2v)$ and $\mathcal S=\tr N$, we obtain
\begin{equation}\label{wp-gradient-cone}
N^{11}\geq\Delta v=\mathcal S/(n-1),\qquad
\mathcal S\geq cp^2/r.
\end{equation}
If $\rho^a\Delta v<2P$ at this point, then
$\Delta v\leq CP r^{-a}$ by~\eqref{wp-holder-ball}, and hence
$Q^2\leq CP r^{1-a}$.

Otherwise, differentiate $\sigma_2(D^2v)=b^2$ once
and apply $N^{ij}\partial_{ij}$ to the logarithm of the test function.
We use
\[
\frac{\Phi''}\Phi-3\left(\frac{\Phi'}\Phi\right)^2
=\frac1{8(2r-w)^2},\qquad
N^{ij}w_{ij}=2b^2-N^{ij}W_{ij}\geq-C\mathcal S,
\]
and
$N^{ij}w_iw_j\geq\frac12N^{11}p^2-C\mathcal S\|DW\|_\infty^2$.
Using~\eqref{wp-gradient-first} to bound the negative square from
$\log v_1$ gives
\[
\frac{cN^{11}p^2}{r^2}
\leq\frac Lp+C\mathcal S
\left(\frac1r+\frac{\|DW\|_\infty^2}{r^2}
+\frac1{\ell^2\chi^2}\right).
\]
Multiply by $r^2\chi^2/\mathcal S$ and
use~\eqref{wp-gradient-cone} to get
\begin{equation}\label{wp-gradient-reduced}
Q^2\leq\frac{CLr^3}{Q^3}
+C\left(r+\|DW\|_\infty^2+r^2/\ell^2\right).
\end{equation}
For $Z=r^{a-1}Q^2$, this implies
\[
Z\leq\frac{CLr^{(5a+1)/2}}{Z^{3/2}}+C
\leq\frac{CL}{Z^{3/2}}+C.
\]
Here the terms independent of $L$ are bounded by
\eqref{wp-holder-ball} and $a\geq3$. Thus $Z\leq C(1+L)\leq CP$.
The values of $\Phi$ at the maximum and at the center are comparable. This proves
the first estimate in~\eqref{wp-weighted-estimates}.

For the Hessian estimate, let
\[
H=\max_{\overline{\mathcal O}\times\mathbb S^{n-1}}
\rho^av_{\xi\xi},
\]
and choose a maximizing point $x_0$, with $r=\rho(x_0)>0$.
On $\mathcal O_r=\{\rho>r/3\}$, the gradient bound gives a number
$K_r\geq1$ such that
\begin{equation}\label{wp-local-gradient}
|Dv|+|DW|\leq K_r,\qquad K_r^2\leq CP r^{1-a}.
\end{equation}
Define
\begin{equation}\label{wp-rational-cutoff}
z(\rho)=\frac{r(3\rho-r)}{\rho+r}
=\rho-\frac{(\rho-r)^2}{\rho+r},\qquad
\phi(s)=(1-s/K_r^2)^{-1/8},
\end{equation}
and maximize
\[
\Psi=z(\rho)^a\phi(|Dv|^2/2)v_{\xi\xi}
\quad\text{on }\overline{\mathcal O_r}\times\mathbb S^{n-1}.
\]
The maximum is interior, since $z=0$ on $\partial\mathcal O_r$.
One has $1\leq\phi\leq2^{1/8}$, $0<z\leq\rho$, and $z(r)=r$.
Since
\[
\frac{z(rt)}{rt}=1-\frac{(t-1)^2}{t(t+1)}
\leq\frac{44}{45}\quad
\left(t>\frac13,\ t\notin\left(\frac34,\frac54\right)\right)
\]
and $2^{1/8}(44/45)^a<1$ for $a\geq12$, comparison with $x_0$
gives
\begin{equation}\label{wp-maximum-layer}
\frac34<\frac\rho r<\frac54
\end{equation}
at every maximum of $\Psi$. In particular,
\begin{equation}\label{wp-cutoff-derivatives}
z\asymp r,\qquad z'/z\asymp r^{-1},\qquad
E:=1-\frac{zz''}{(z')^2}=\frac{1+3\rho/r}{2}\leq\frac{19}{8}.
\end{equation}
If $\rho^a\Delta v<2P$ at the maximum of $\Psi$, then $H\leq\Psi\leq CP$.
We may therefore assume that $G(D^2v)-b$ and its derivatives
of order at most two vanish at the maximum.

Diagonalize $D^2v$ there, with $\lambda_1\geq\cdots\geq\lambda_n$,
and take the maximizing direction to be $e_1$. Put
\[
F^{ii}=G^{ii}(D^2v)=\frac{\Delta v-\lambda_i}{2b},\qquad
\mathcal F=\sum_iF^{ii},\qquad \theta=\phi'/\phi.
\]
The first derivative identity is
\begin{equation}\label{wp-hessian-first}
0=a\frac{z'}z\rho_i+\theta v_i\lambda_i+
\frac{v_{11i}}{\lambda_1}.
\end{equation}
Differentiating twice and contracting with $F^{ij}$ gives
\begin{align}\label{wp-hessian-second}
0\geq{}&a\frac{z'}z\sum_iF^{ii}\rho_{ii}
-aE\left(\frac{z'}z\right)^2\sum_iF^{ii}\rho_i^2\notag\\
&+\theta\left(\sum_iF^{ii}\lambda_i^2+Dv\cdot Db\right)
+\left(\frac{\phi''}\phi-\theta^2\right)
\sum_iF^{ii}(v_i\lambda_i)^2\notag\\
&+\frac{b_{11}-G^{ij,rs}v_{ij1}v_{rs1}}{\lambda_1}
-\sum_iF^{ii}\frac{v_{11i}^2}{\lambda_1^2}.
\end{align}
Admissibility of $W$ gives $\sum_iF^{ii}\rho_{ii}\geq-b$.
Also $|D\rho|\leq K_r$, $|Db|\leq CL$,
$|D^2b|\leq C(J+L^2)$, and
\begin{equation}\label{wp-phi-identities}
\theta\asymp K_r^{-2},\qquad \phi''/\phi=9\theta^2.
\end{equation}

Fix $\varepsilon_n>0$ sufficiently small, and put
$\mathcal A=\mathcal F$ if $\lambda_2\geq\varepsilon_n\lambda_1$
and $\mathcal A=F^{11}$ otherwise. We claim that
\begin{equation}\label{wp-nonseparated}
0\geq cK_r^{-2}\mathcal A\lambda_1^2
-C\mathcal A K_r^2r^{-2}
-C\left(r^{-1}+L/K_r+(J+L^2)/\lambda_1\right).
\end{equation}
In the first case, writing $T=\Delta v$, we have
$T>|D^2v|\geq\sqrt2\lambda_2$, whence
$F^{22}\geq c\mathcal F$ and
$\sum_iF^{ii}\lambda_i^2\geq c\mathcal F\lambda_1^2$.
Substitute~\eqref{wp-hessian-first} in the last square of
\eqref{wp-hessian-second}, and use concavity and
\eqref{wp-cutoff-derivatives}--\eqref{wp-phi-identities}.

In the second case, make this substitution only for $i=1$.
For $i\geq2$, substitute~\eqref{wp-hessian-first} into the
negative cutoff square instead. Together with the last square of
\eqref{wp-hessian-second}, these terms are bounded below by
\[
-\left(1+\frac{2E}{a}\right)F^{ii}\frac{v_{11i}^2}{\lambda_1^2}
-\frac{2E}{a}\theta^2F^{ii}(v_i\lambda_i)^2.
\]
Here $1+2E/a\leq67/48<3/2$, and the second term is absorbed by
\eqref{wp-phi-identities}. The off-diagonal second derivatives of
$G$ and concavity on diagonal matrices give
\begin{align*}
-\frac{G^{ij,rs}v_{ij1}v_{rs1}}{\lambda_1}
-\frac32\sum_{i\geq2}F^{ii}\frac{v_{11i}^2}{\lambda_1^2}
&\geq\sum_{i\geq2}
\frac{\lambda_1-\frac34(\Delta v-\lambda_i)}{b\lambda_1^2}
v_{11i}^2\geq0,
\end{align*}
since $\Delta v-\lambda_i\leq[1+(n-2)\varepsilon_n]\lambda_1$
and we choose $(n-2)\varepsilon_n\leq1/3$ (with no restriction
needed for $n=2$). This proves~\eqref{wp-nonseparated}.

If the second term in~\eqref{wp-nonseparated} cannot be absorbed
by half the first, then $r\lambda_1\leq CK_r^2$.
Otherwise use $\mathcal A\lambda_1\geq F^{11}\lambda_1\geq b/n$.
Indeed, for $s=\sum_{i\geq2}\lambda_i>0$, Newton's inequality and
$s\leq(n-1)\lambda_1$ give
\[
b^2=\lambda_1s+\sigma_2(\lambda_2,\ldots,\lambda_n)
\leq\lambda_1s+\frac{n-2}{2(n-1)}s^2
\leq\frac n2\lambda_1s.
\]
It follows that
\[
\lambda_1^2\leq C(K_r^2/r+K_rL)\lambda_1
+CK_r^2(J+L^2).
\]
The quadratic inequality and the preceding alternatives give
\begin{equation}\label{wp-local-hessian}
\lambda_1\leq C\left(1+K_r^2/r+K_rL+K_r\sqrt J\right).
\end{equation}
By~\eqref{wp-maximum-layer}, $H\leq Cr^a\lambda_1$ at this point.
Insert~\eqref{wp-local-gradient} into~\eqref{wp-local-hessian} and
use $L,\sqrt J\leq\sqrt P$ to obtain
\[
H\leq C\left(r^a+P+Pr^{(a+1)/2}\right)\leq CP.
\]
Finally $\Delta v\leq n\lambda_{\max}(D^2v)$, which proves
the second estimate in~\eqref{wp-weighted-estimates}.
\end{proof}

\begin{corollary}\label{thm3.3}
Let $0<m\leq M$, $A>0$, and $\tau\geq1$. Suppose that
$\eta\in C^\infty([0,\infty))$, $0\leq\eta\leq1$, and
$\eta=1$ on $[\tau,\infty)$. Let $g,b\in C^\infty(B_2)$ satisfy
$m\leq g,b\leq M$, and suppose that $v\in C^\infty(B_2)$ is an
admissible solution of
\begin{equation}\label{eqn3.10}
G(D^2v)+(g-b)\eta(\Delta v/P)=g\quad\text{in }B_2,
\end{equation}
with $\|v\|_{L^\infty(B_2)}\leq A$ and $P\geq1$. Set
\[
L=\|D(b^2)\|_{L^\infty(B_2)},\qquad
J=\|D^2(b^2)\|_{L^\infty(B_2)}.
\]
Then
\begin{equation}\label{eqn3.11}
\sup_{B_{1/2}}\Delta v\leq C_0P+C(1+L^2+J),
\end{equation}
where $C_0,C$ depend only on $n,m,M,A,\tau$.
\end{corollary}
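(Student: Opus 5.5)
The plan is to deduce Corollary~\ref{thm3.3} from Lemma~\ref{wp-weighted-bound}. We take $W$ to be a fixed quadratic, so that $\rho=W-v-\delta_0$ is a localizing weight. The hypothesis $D^\beta(\sigma_2(D^2v)-b^2)=0$ should hold where $\rho^a\Delta v\geq2P\tau$. Accordingly we apply the lemma with $P$ replaced by $P'=\tau P+P_b$.

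\textbf{Choice of weight.} Set $W(x)=A+1+\tfrac{K}{2}(1-|x|^2)$, with $K$ large depending on $A$ (say $K=8(2A+2)$), and $\delta_0=1$. Then:
\begin{itemize}
\item $W-v-\delta_0\geq A+K/2-A\cdot0\ldots$; more precisely, on $B_{1/2}$ we have $W-v-1\geq \tfrac{3K}{8}-2A>0$, bounded below by a constant $c(A)$.
\item At $|x|=1$ we have $W-v-1\leq 2A$. To make $\mathcal O=\{\rho>0\}\Subset V=B_{3/2}$ we need $\rho<0$ near $|x|=3/2$: there $W\le A+1-\tfrac{5K}{8}$, so $\rho\leq 2A-\tfrac{5K}{8}<0$.
\end{itemize}
So with $V=B_{3/2}$ and $V'=B_{7/4}$, $\mathcal O\Subset V$.

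\textbf{Admissibility and constants.} $W$ is not admissible, since $D^2W=-KI$, so the sign convention must be arranged differently. We replace $W$ by $W=C_1|x|^2$ (admissible) and set $\rho=v-\ldots$? No: the lemma's $\rho=W-v-\delta_0$ needs $W$ admissible and large inside, so $v$ is bounded above by $W$ on the interior. Since $v$ is subharmonic, it cannot be localized that way directly. Instead we use the modification solving the Dirichlet problem: let $W$ be the admissible solution on $B_{7/4}$ of $G(D^2W)=m/2$, with boundary values $v-\kappa$. By comparison ($G(D^2v)\geq m$ since $h\geq m$), $W\geq v-\kappa$. Then take $W$ plus the term $\kappa(1-\tfrac{4}{9}|x|^2)\cdot$... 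This bookkeeping is the main obstacle. A cleaner route is to pick $W=v_0+\varepsilon(\tfrac{49}{16}-|x|^2)$, where $v_0$ solves $G(D^2v_0)=m/2$ with $v_0=v$ on $\partial B_{7/4}$. Then $W$ is admissible for small $\varepsilon(n,m)$, and $W\geq v+\varepsilon(\tfrac{49}{16}-|x|^2)$. Choosing $\delta_0=\varepsilon/2$ gives $\rho>\varepsilon/2$ on $B_{1/2}$ and $\rho<0$ near $\partial B_{7/4}$. We take $V$ slightly smaller than $B_{7/4}$.

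We still need the $C^2(V')$ norm of $W$ and a H\"older bound on $\rho$. The interior estimates of Li and Wu~\cite{LW} for constant density give $\|v_0\|_{C^2}$ on interior subsets, depending on $n,m,A$. The $C^{0,1/2}$ bound on $v$ comes from the interior estimates used in Lemma~\ref{thm2.4}. Shrinking domains (fix $V'=B_{3/2}$ and cut with $\delta_0$ so that $\mathcal O\Subset B_{5/4}$) keeps everything interior. This gives the constant $A'(n,m,M,A)$.

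\textbf{Conclusion.} On $\{\rho>0,\ \rho^a\Delta v\geq2P'\}$ we have $\Delta v\geq 2P'/\sup\rho^a\geq\tau P$ once $\sup\rho\leq1$, which we arrange by scaling $W$. There $\eta\equiv1$ near the point, hence $G(D^2v)=b$ identically in a neighbourhood. Lemma~\ref{wp-weighted-bound} with $a=12$ then gives $\rho^a\Delta v\leq CP'$. On $B_{1/2}$, $\rho\geq\varepsilon/2$, so
\[
\Delta v\leq C(\tau P+1+L^2+J),
\]
which is \eqref{eqn3.11} with $C_0=C\tau$. The derivative bounds $L,J$ over $V'\subset B_2$ are dominated by those over $B_2$. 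The delicate point is ensuring $\sup\rho\le1$, so that the threshold set lies in $\{\Delta v\geq\tau P\}$ and the open-set vanishing of $D^\beta$ holds. If scaling is inconvenient, we instead enlarge $P'$ by the factor $\sup\rho^a\le C$.
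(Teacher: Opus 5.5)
\textbf{Verdict.} There is a genuine gap, in the construction of the localizing weight $W$.

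\textbf{What matches the paper.} Your reduction to Lemma~\ref{wp-weighted-bound} is the right one. Your treatment of the threshold also matches the paper: you enlarge $P$ by $\tau$ and by $\sup\rho^a$, so that $\rho^a\Delta v\ge 2P'$ forces $\Delta v>\tau P$, where $\eta\equiv1$.

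\textbf{Why your weight fails.} Take $W=v_0+\varepsilon(\tfrac{49}{16}-|x|^2)$.
\begin{itemize}
\item Admissibility of $W$ requires $D^2v_0-2\varepsilon I\in\Gamma_2$. Since $\sigma_2(H-\kappa I)=\sigma_2(H)-(n-1)\kappa\tr H+\binom n2\kappa^2$ and $\sigma_2(D^2v_0)=m^2/4$, this needs roughly $\varepsilon\,\Delta v_0\lesssim m^2$ on a neighbourhood of $\overline{\mathcal O}$. The lemma needs $W$ admissible in $D$ and $C^2$-bounded on $V'\supset\overline{\mathcal O}$. So $\varepsilon$ cannot depend only on $n,m$.
\item With $\delta_0=\varepsilon/2$, you have $\rho>0$ on all of $\{|x|^2<\tfrac{41}{16}\}$. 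The only thing keeping $\mathcal O$ away from $\partial B_{7/4}$ is $0\le v_0-v\le C\dist(x,\partial B_{7/4})^{1/2}$. That guarantees a collar of width only of order $\varepsilon^2$.
\item The Li--Wu bound for $\Delta v_0$ degenerates near $\partial B_{7/4}$, because the boundary data are only uniformly $C^{0,1/2}$. Decreasing $\varepsilon$ therefore pushes $\mathcal O$ into a region where $\Delta v_0$ is larger, which demands a still smaller $\varepsilon$. The choice is circular and does not close.
\item Your fallback, a larger $\delta_0$ so that $\mathcal O\Subset B_{5/4}$, needs
\[
\sup_{B_{7/4}\setminus B_{5/4}}(W-v)<\delta_0<\inf_{B_{1/2}}(W-v).
\]
Nothing orders these quantities, since $v_0-v$ can be much larger in the annulus than on $B_{1/2}$. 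Making the quadratic big enough to dominate brings back the admissibility constraint.
\end{itemize}

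\textbf{The missing ingredient.} You need a uniform lower bound for $v_0-v$ on $B_{1/2}$ that does not come from an added quadratic. Then $W=v_0$ itself can serve as the weight, and it is admissible everywhere since $G(D^2v_0)=m/2$. The paper obtains this bound from the linearization:
\begin{itemize}
\item Concavity and homogeneity of $G$, together with $G(D^2v)\ge m$, give $N(D^2v_0):D^2(v_0-v)\le-m^2/2$.
\item On $B_1$, Li--Wu makes $N(D^2v_0)$ positive definite with bounded trace.
\item Comparison with a small multiple of $\tfrac1{16}-|y-x|^2$ on $B_{1/4}(x)$ then gives $v_0-v\ge c_*(n,m,M,A)>0$ on $B_{1/2}$.
\end{itemize}
With $\delta_0=c_*/4$ fixed, the Poisson bound $v_0-v\le C\dist^{1/2}$ confines $\{\rho>0\}$ to a fixed $V\Subset B_{3/2}$ on which $v_0$ has uniform $C^2$ bounds. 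The rest of your argument then goes through.
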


\begin{proof}
Equation~\eqref{eqn3.10} gives $m\leq G(D^2v)\leq M$.
Let $v_0$ solve
\[
G(D^2v_0)=m/2\quad\text{in }B_{3/2},\qquad
v_0=v\quad\text{on }\partial B_{3/2},
\]
using the smooth admissible Dirichlet theory~\cite{CNS}.
If $h$ is the harmonic extension of the boundary values, comparison
gives $v\leq v_0\leq h$, hence $\|v_0\|_\infty\leq A$.
The constant-density estimate~\cite[Theorem~1.1]{LW}, applied to
$2v_0/m$, bounds $D^2v_0$ on fixed smaller balls.

Put $q=v_0-v$ and $N_0=N(D^2v_0)$. Concavity gives
\[
N_0^{ij}q_{ij}
=m\,DG(D^2v_0):D^2q\leq-m^2/2.
\]
On $B_1$, $N_0$ is positive definite and $\tr N_0\leq C$.
Comparison on $B_{1/4}(x)$ with a sufficiently small fixed multiple of
$1/16-|\,\cdot-x|^2$, for $x\in B_{1/2}$, yields
\[
q\geq c_*>0\quad\text{on }B_{1/2},
\]
where $c_*$ depends only on $n,m,M,A$.
The local H\"older estimate used in Lemma~\ref{thm2.4} gives a uniform
$C^{0,1/2}(B_{7/4})$ bound for $v$. The Poisson formula consequently gives
\[
0\leq q\leq h-v
\leq C\dist(x,\partial B_{3/2})^{1/2}
\quad\text{in }B_{3/2}.
\]
With $\delta=c_*/4$ and $\rho=v_0-v-\delta$, choose fixed balls
$V\Subset V'\Subset B_{3/2}$ so that
\[
\overline{\{\rho>0\}}\subset V,\qquad
\rho\geq3c_*/4\quad\text{on }B_{1/2}.
\]
The preceding estimates also give
\[
\|v_0\|_{C^2(V')}+[\rho]_{C^{0,1/2}(V')}
+\|\rho_+\|_{L^\infty(V')}\leq C,
\]
with all choices depending only on $n,m,M,A$.

Apply Lemma~\ref{wp-weighted-bound} with $W=v_0$, $\delta_0=\delta$,
$a=12$, and threshold
\[
\widetilde P=\max\left\{1+L^2+J,\frac\tau2R_*^aP\right\},
\qquad R_*=\max\{1,\|\rho_+\|_{L^\infty(V')}\}.
\]
Indeed, $\rho^a\Delta v\geq2\widetilde P$ implies
$\Delta v\geq\tau P$. At such a point~\eqref{eqn3.10} agrees through
second derivatives with $G(D^2v)=b$, since $\eta=1$ and
$\eta'=\eta''=0$ on $[\tau,\infty)$. The lemma gives
\[
\sup_{\{\rho>0\}}\rho^a\Delta v\leq C\widetilde P.
\]
The lower bound for $\rho$ on $B_{1/2}$ proves~\eqref{eqn3.11}.
\end{proof}

By rescaling and a finite covering, Corollary~\ref{thm3.3} applies to
any fixed pair of relatively compact nested balls, with the constants also
depending on the two radii. The rescaling
$v_r(y)=r^{-2}v(x_0+ry)$ leaves both the Hessian and the threshold $P$
unchanged.

\subsection{Absolute continuity of the Hessian}\label{sec-absolute}

\begin{proof}[Proof of Theorem~\ref{thm1.1}]
Fix concentric balls
\[
U\Subset V_1\Subset V_2\Subset D\Subset\Omega
\]
and choose $\zeta\in C_c^\infty(D)$, $0\leq\zeta\leq1$, equal to one near $\overline V_2$. Write
\begin{equation}\label{eqn4.1}
\mu_2[u]=g^2\,dx,\qquad
D^2u=A\,dx+(D^2u)_{\sing},\qquad
\Delta u=T\,dx+\mu_s,
\end{equation}
where $m\leq g\leq M$ and $T=\tr A\in L^1(D)$. The trace measure is finite on $D$ because $D\Subset\Omega$.

Fix an arbitrary $b\in C^\infty(\overline D)$ with $m\leq b\leq M$, and let $P\geq1$. Take the approximation $u_j,g_j$ of Lemma~\ref{thm2.4}, and use Proposition~\ref{thm3.2} to solve
\begin{equation}\label{eqn4.2}
G(D^2v_j)+\zeta(g_j-b)\eta(\Delta v_j/P)=g_j
\quad\text{in }D,\qquad v_j=u_j\quad\text{on }\partial D.
\end{equation}
The solutions have a common amplitude bound, and
\begin{equation}\label{eqn4.3}
m\leq h_j:=G(D^2v_j)\leq M,
\qquad\sup_{V_1}\Delta v_j\leq C_0P+C_b
\end{equation}
by Corollary~\ref{thm3.3}. The constants $C_0$ and the amplitude bound depend only on the fixed domains, $n,m,M$, and $\|u\|_{L^\infty(D)}$. In particular, $C_0$ is independent of $b$ and its derivatives.

Put
\[
\theta_j=\zeta^2\eta(\Delta v_j/P)^2.
\]
The local Sobolev bounds in \cite[(4.2)]{TW2}, Rellich compactness,
and weak-star compactness in $L^\infty(D)$ give, along a common
subsequence,
\begin{equation}\label{eqn4.4}
v_j\to v\text{ in }L^1_{\mathrm{loc}}(D),\qquad
h_j\rightharpoonup^*\bar h,\qquad
\theta_j\rightharpoonup^*\theta.
\end{equation}
Here $v\in\Phi_2(D)$ is locally bounded, $m\leq\bar h\leq M$, and $0\leq\theta\leq1$. Let $B=d(D^2v)_{\ac}/dx$ and $S=\tr B$. The pointwise inequality $\theta_j\leq\Delta v_j/P$ passes to measures. Taking its absolutely continuous part yields
\begin{equation}\label{eqn4.5}
0\leq\theta\leq\min\{1,S/P\}\quad\text{almost everywhere in }D.
\end{equation}

Consider the nonnegative measures
\[
\nu_j=\mathcal B[u_j,v_j]-2g_jh_j\,dx
=\mathscr D(D^2u_j,D^2v_j)\,dx.
\]
The comparison identity~\eqref{eqn2.8} gives
\begin{equation}\label{eqn4.6}
\nu_j(D)\leq\int_D|g_j-b|^2\theta_j\,dx.
\end{equation}
By mixed-measure continuity and \eqref{eqn4.4}, these measures converge locally weakly to
\begin{equation}\label{eqn4.7}
\nu=\mathcal B[u,v]-2g\bar h\,dx\geq0.
\end{equation}
Indeed, $g_j\to g$ in $L^1$ and $h_j$ are uniformly bounded. The right-hand side of \eqref{eqn4.6} converges to $\int_D|g-b|^2\theta$, because $|g_j-b|^2\to|g-b|^2$ in $L^1(D)$. Lower semicontinuity of the mass gives
\begin{equation}\label{eqn4.8}
\nu(D)\leq\int_D|g-b|^2\theta\,dx<\infty.
\end{equation}

Write $d=d\nu_{\ac}/dx\geq0$. Lemma~\ref{thm2.1} gives
$G(A)\geq m$, so $T>0$ almost everywhere. Since $|B|\leq S$,
\[
\mathscr B(A,B)\geq S(T-|A|)
=\frac{2G(A)^2S}{T+|A|}\geq m^2\frac ST.
\]
Together with~\eqref{eqn2.5}, this yields
\begin{equation}\label{eqn4.9}
d\geq\mathscr B(A,B)-2g\bar h
\geq m^2\frac ST-2M^2.
\end{equation}

Set $e=g-b$ and $R=4M^2/m^2$. On $\{S>RT\}$, \eqref{eqn4.9} gives $d\geq2M^2$, and hence $e^2\theta\leq d/2$. On $\{S\leq RT\}$, \eqref{eqn4.5} gives $e^2\theta\leq Re^2T/P$. Substitution in \eqref{eqn4.8} yields
\[
\nu(D)\leq\frac RP\int_D|g-b|^2T\,dx+\frac12\nu_{\ac}(D).
\]
As $\nu_{\ac}(D)\leq\nu(D)$, we have proved
\begin{equation}\label{eqn4.10}
P\nu(D)\leq2R\int_D|g-b|^2T\,dx.
\end{equation}

We now choose $P\geq\max\{1,C_b\}$. Then \eqref{eqn4.3} gives $\Delta v_j\leq C_1P$ on $V_1$, with $C_1=C_0+1$ independent of $b$. Let $\kappa=c/P$, where $c=c(n,m,C_1)>0$ is sufficiently small. Since
\[
\sigma_2(H-\kappa I)
=\sigma_2(H)-(n-1)\kappa\tr H+\binom n2\kappa^2,
\]
we have $\sigma_2(D^2v_j-\kappa I)\geq m^2/2$ on $V_1$. Decreasing $c$ if necessary also gives $\tr(D^2v_j-\kappa I)>0$: use $\Delta v_j\geq\sqrt{2n/(n-1)}\,m$ and $P\geq1$. Thus
\[
v_j-\kappa|x|^2/2\in\Phi_2(V_1).
\]
Passing to the local $L^1$ limit gives $v-\kappa|x|^2/2\in\Phi_2(V_1)$. Positivity and bilinearity of the mixed measure, together with \eqref{eqn2.3}, imply
\[
\mathcal B[u,v]\geq(n-1)\kappa\Delta u\quad\text{on }V_1.
\]
The term subtracted in \eqref{eqn4.7} is absolutely continuous. Taking singular parts and then restricting to $U$ gives
\begin{equation}\label{eqn4.11}
\nu_s(U)\geq\frac{c(n-1)}P\mu_s(U).
\end{equation}
Combining \eqref{eqn4.10} and \eqref{eqn4.11} proves
\begin{equation}\label{eqn4.12}
\mu_s(U)\leq C\int_D|g-b|^2T\,dx,
\end{equation}
where $C$ is independent of $b$ and its derivatives.

Finally, extend $g$ with values in $[m,M]$ and let $b_\ell$ be its
smooth convolutions. Then $b_\ell\to g$ almost everywhere and
$m\leq b_\ell\leq M$. Since $T\in L^1(D)$, dominated convergence
in~\eqref{eqn4.12} gives
$\mu_s(U)=0$. The domination $|D^2u|\leq\Delta u$ then eliminates the
singular part of $D^2u$ on $U$. As the balls were arbitrary,
$u\in W^{2,1}_{\mathrm{loc}}(\Omega)$.

To prove~\eqref{w21-interior-estimate}, choose
$\chi\in C_c^\infty(B_{2r}(x_0))$ with $0\leq\chi\leq1$,
$\chi=1$ on $B_r(x_0)$, and $|D^2\chi|\leq C_nr^{-2}$.
Set $c=\inf_{B_{2r}(x_0)}u$. Integration by parts gives
\begin{align*}
\int_{B_r(x_0)}|D^2u|\,dx
&\leq\int_{B_{2r}(x_0)}\chi\,\Delta u\,dx\\
&=\int_{B_{2r}(x_0)}(u-c)\Delta\chi\,dx
\leq C_nr^{n-2}\osc_{B_{2r}(x_0)}u.\qedhere
\end{align*}
\end{proof}

\section{Interior \texorpdfstring{$W^{2,p}$}{W2,p} regularity}\label{sec-w2p}

\subsection{Constant-density comparison}

For a ball $B_{2r}(x_0)\Subset\Omega$, set
\[
u_r(y)=r^{-2}u(x_0+ry),\qquad f_r(y)=f(x_0+ry).
\]
Since $D^2u_r(y)=D^2u(x_0+ry)$, it suffices to prove
\eqref{wp-interior-estimate} on $B_1$ with $B_2\Subset\Omega$.
We use $u,f$ for the rescaled functions and,
subtracting a constant, assume
$0\leq u\leq A_*:=\osc_{B_2}u$ on $B_2$. Set
\[
D=B_{3/2},\qquad U_1=B_{9/8},\qquad
g=\sqrt f,
\]
and fix
\[
c_*=\sqrt\lambda/8,\qquad m=\sqrt\lambda/4,\qquad
M=\sqrt\Lambda+1.
\]
In this subsection, constants depend only on $n$, $\lambda$,
$\Lambda$, and $A_*$, unless otherwise indicated.

The estimates in the proof of Lemma~\ref{thm2.4} give
$[u]_{C^{0,1/2}(B_{7/4})}\leq C$.

Let $W$ be the continuous admissible viscosity solution of
\[
G(D^2W)=c_*\quad\text{in }D,\qquad W=u\quad\text{on }\partial D.
\]
Let $H$ be the harmonic extension of $u|_{\partial D}$. Comparison gives
$0\leq u\leq W\leq H\leq A_*$.
The constant-density estimates~\cite[Theorems~1.1--1.2]{LW}, applied
to $W/c_*$ on interior balls, imply that $W$ is smooth in $D$ and
$K_W:=\sup_{B_{5/4}}\Delta W\leq C$.
Fix $r_*=1/16$, and choose $0<t\leq1$, depending only on these data,
such that
\[
c_*^2+(n-1)tK_W+\binom n2t^2<\lambda.
\]
For $x\in\overline{U_1}$, the function
$W_x(y)=W(y)+\tfrac t2(|y-x|^2-r_*^2)$ is admissible and satisfies
$\sigma_2(D^2W_x)<\lambda\leq f$ in $B_{r_*}(x)$, with
$W_x=W\geq u$ on its boundary. Viscosity comparison at the center
therefore gives, with $\delta_0=tr_*^2/8$,
\begin{equation}\label{wp-fixed-gap}
W-u\geq4\delta_0\quad\text{on }\overline{U_1}.
\end{equation}

The Poisson formula and the H\"older bound for $u$ give
\begin{equation}\label{wp-boundary-gap}
0\leq W(x)-u(x)\leq H(x)-u(x)
\leq C\dist(x,\partial D)^{1/2}\quad(x\in D).
\end{equation}
Choose $0<\ell\leq1/8$ such that $C\sqrt\ell\leq\delta_0/4$, and set
\[
V=B_{3/2-\ell/2},\qquad V'=B_{3/2-\ell/4}.
\]
Use the construction in the proof of Lemma~\ref{thm2.4}, choosing
$g_j$ by convolution of the continuous function $g$ on a neighborhood
of $\overline D$ and common smooth boundary values
$\varphi_j\to u|_{\partial D}$ uniformly. It gives smooth admissible
Dirichlet solutions $u_j,W_j$ with respective $G$-densities $g_j,c_*$
and boundary values $\varphi_j$, such that
\[
g_j\to g\quad\text{uniformly on }\overline D,\qquad
u_j\to u,\quad W_j\to W\quad\text{uniformly on }\overline D.
\]
Discard finitely many terms so that
$\|u_j-u\|_\infty+\|W_j-W\|_\infty\leq\min\{1,\delta_0/4\}$.
Then~\eqref{wp-fixed-gap} and~\eqref{wp-boundary-gap} give
\begin{equation}\label{wp-uniform-gap}
\rho_{u,j}:=W_j-u_j-\delta_0\geq2\delta_0\quad\text{on }U_1,
\qquad \{\rho_{u,j}>0\}\Subset V.
\end{equation}
Indeed, $\rho_{u,j}<0$ whenever $\dist(x,\partial D)\leq\ell$.
The functions $W_j$ have a uniform $C^2(V')$ bound by the same constant-density estimate.
For large $j$, the densities $g_j$ lie in $[m,M]$. All these bounds
depend only on $n$, $\lambda$, $\Lambda$, and $A_*$.

Fix
\begin{equation}\label{wp-exponent}
a=\max\{12,n\}.
\end{equation}
Let $0<d\leq\min\{1,m\}$, with further restrictions specified below, and set
\[
\omega_f(t)=\sup_{\substack{x,y\in B_2\\|x-y|\leq t}}|f(x)-f(y)|.
\]
Choose $0<\varepsilon<1/4$ such that
\[
\omega_f(\varepsilon)\leq\sqrt\lambda\,d/4.
\]
For a standard mollifier $\vartheta_\varepsilon$, define
$b=g*\vartheta_\varepsilon-d/2$ on $\overline D$; the convolution uses
only values of $g$ in $B_{7/4}$. Since
$|\sqrt s-\sqrt t|\leq|s-t|/(2\sqrt\lambda)$ for $s,t\geq\lambda$,
we have $\|g*\vartheta_\varepsilon-g\|_{L^\infty(D)}\leq d/8$.
For all $j$ with $\|g_j-g\|_\infty\leq d/8$, this gives
\begin{equation}\label{wp-minorant}
m\leq b\leq g_j\leq M,\qquad 0\leq g_j-b\leq d.
\end{equation}
In fact, $d/4\leq g_j-b\leq3d/4$ and
$b\geq\sqrt\lambda-5d/8>m$.
Differentiating the convolution gives
\begin{equation}\label{wp-modulus-threshold}
P_b=1+\|D(b^2)\|_{L^\infty(V')}^2
+\|D^2(b^2)\|_{L^\infty(V')}
\leq C(n,\Lambda)(1+\varepsilon^{-2}).
\end{equation}

\subsection{The auxiliary equation}

Fix a nondecreasing $\eta\in C^\infty(\mathbb R)$ such that
$0\leq\eta\leq1$, $\eta=0$ on $(-\infty,1]$, and
$\eta=1$ on $[2,\infty)$. We retain the ball
$D$, the constant $\delta_0>0$, and the exponent $a$ in
\eqref{wp-exponent}, and omit the approximation index.

\begin{proposition}\label{wp-solvability}
Let $u,W,g,b\in C^\infty(\overline D)$ satisfy
\begin{gather*}
D^2u,D^2W\in\Gamma_2,\qquad
u=W\quad\text{on }\partial D,\\
G(D^2u)=g,\qquad G(D^2W)=c_*<m,\\
m\leq b\leq g\leq M,\qquad 0\leq g-b\leq d.
\end{gather*}
There is $d_*=d_*(m,M,\eta)\in(0,m]$ such that, for $d\leq d_*$ and
$P\geq1$, the Dirichlet problem
\begin{equation}\label{wp-auxiliary}
\begin{cases}
G(D^2v)+(g-b)\eta\bigl((W-v-\delta_0)_+^a\Delta v/P\bigr)=g
&\text{in }D,\\
v=u&\text{on }\partial D
\end{cases}
\end{equation}
has a unique solution $v\in C^\infty(\overline D)$ with
$D^2v\in\Gamma_2$. Moreover,
\begin{equation}\label{wp-order}
u\leq v\leq W,\qquad m\leq G(D^2v)\leq M.
\end{equation}
\end{proposition}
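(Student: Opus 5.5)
We follow the continuity-method scheme of Proposition~\ref{thm3.2}. Put $\rho_v=W-v-\delta_0$ and, for $0\le t\le1$,
\[
F_t(x,v,H)=\frac{\sigma_2(H)-\bigl(g-t(g-b)\eta(\rho_v^a\tr H/P)\bigr)^2}{\tr H}.
\]
The solution is $v_t$ with $F_t(x,v_t,D^2v_t)=0$ and $v_t=u$ on $\partial D$, starting from $v_0=u$. The new feature is that the operator depends on $v$ itself, through $\rho_v$.

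\emph{Structural conditions.} For fixed $x$ and $v$, set $h(s)=g-t(g-b)\eta(\rho_+^as/P)$. Since $0\le g-b\le d$, we have $|sh'|+|s^2h''|\le C_\eta d$. Taking $d_*$ small in terms of $m,M,\eta$ then gives \eqref{eqn3.3} with the uniform margins used in Section~\ref{sec-auxiliary}. So $F_t$ is concave and elliptic in $H$, and its zeros satisfy $G=h\in[b,g]\subset[m,M]$, which is the second part of \eqref{wp-order}.

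\emph{Monotonicity in $v$ and uniqueness.} Increasing $v$ decreases $\rho_+$, hence decreases $\eta$, increases $h$, and so decreases $F_t$. The operator is therefore proper (nonincreasing in $v$), and the comparison principle holds. For uniqueness, suppose $v^1,v^2$ are two solutions and $v^1-v^2$ has a positive interior maximum. At that point $D^2v^1\le D^2v^2$, so $F(v^1,D^2v^1)\le F(v^2,D^2v^2)$. To make the inequality strict, we either use strict ellipticity along the segment as in Proposition~\ref{thm3.2}, or perturb by $\epsilon|x|^2$. The caveat is that $(\cdot)_+^a$ is only $C^{a-1}$, but $a\ge12$ is ample for the Schauder theory.

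\emph{Barriers.} Since $G(D^2u)=g\ge h$, $u$ is a subsolution, giving $u\le v_t$. Since $G(D^2W)=c_*<m\le h$, $W$ is a supersolution, giving $v_t\le W$. This proves \eqref{wp-order}.

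\emph{A priori estimates uniform in $t$.} The gradient bound comes from the boundary comparison $u_\nu\ge(v_t)_\nu\ge W_\nu$ combined with the interior maximum principle as before. Differentiating the $x$- and $v$-dependence adds terms $\partial_v F\cdot(v_t)_k$. These have a good sign, or are bounded, since the constants may depend on the fixed smooth data and on $P$.

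Near $\partial D$, estimate \eqref{wp-boundary-gap}, in its smooth version, gives $\rho_v<0$ in a collar. There $\eta=0$ and the equation is exactly $G(D^2v)=g$. The boundary $C^2$ argument of Proposition~\ref{thm3.2} (rotations, and the double-normal derivative via $w_0$ replaced by $W$) then applies verbatim. If the smooth data do not supply a collar, we use only that $v=u=W$ on $\partial D$, so $\rho_v=-\delta_0<0$ on $\partial D$, and continuity with the gradient bound yields a collar of width depending on the data.

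For the interior second-derivative bound we maximize $v_{\xi\xi}+C(\underline u-v)$. Where $\rho_+^a\Delta v<2P$ and $\rho>0$, the trace is bounded only when $\rho$ is bounded below, so points with $\rho$ small need separate treatment. We handle them as follows:
\begin{itemize}
\item[(i)] where $\rho\le0$, the equation is $G=g$, which is concave with smooth data and gives the standard bound;
\item[(ii)] where $\eta\equiv1$, the equation is $G=b$, handled in the same way;
\item[(iii)] in the transition region, $\Delta v\le2P\rho^{-a}$, which is bounded only if $\rho$ stays away from $0$.
\end{itemize}

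This last point is the \textbf{main obstacle}: the dependence of $\eta$ on $\rho$, together with second derivatives of $\rho$ appearing when the equation is differentiated twice. To resolve it, I would first rewrite the equation as $\widetilde F(x,v,Dv,D^2v)=0$ and verify concavity in $D^2v$, treating $\rho$ as a lower-order coefficient with derivatives controlled by the $C^1$ bound. The hard part is that $\partial_{x}^2$ of $\eta(\rho^a\tr H/P)$ involves $D^2v$ linearly, through $D^2\rho$, multiplied by $d$. For $d\le d_*$ this is a small perturbation of the concave principal part, absorbable against the good term $F^{ii}\lambda_i^2/\lambda_1$ in the maximum-principle computation. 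This is where $d_*$ is used a second time.

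With the $C^2$ bound, $D^2v_t$ lies in a compact subset of $\Gamma_2$, where $F_t$ is uniformly elliptic and concave. Evans--Krylov and Schauder estimates give uniform $C^{2,\alpha}$ bounds. Closedness follows from these bounds, and openness from the implicit function theorem, whose linearization is invertible because $\partial_vF\le0$. This completes the continuity method and proves the proposition.
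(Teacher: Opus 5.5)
Your overall architecture matches the paper's: the quotient operator $F_t$, the structural conditions \eqref{eqn3.3} from smallness of $d$, properness, the barriers $u\le v_t\le W$, a fixed collar where the equation reduces to $G(D^2v_t)=g$ (so the boundary estimates of Proposition~\ref{thm3.2} carry over with $W$ in place of $w_0$), and Evans--Krylov plus continuity. The genuine gap is the interior second-derivative bound on the transition region $\{1<\Theta<2\}$, where $\Theta=\rho_+^a\Delta v/P$. This is the only new difficulty, and your proposed resolution does not work.

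On the support of $\eta'$, every $x$- or $z$-derivative of the cutoff at fixed $S=\tr H$ produces the factor $a\rho^{a-1}S/P=a\Theta/\rho$. In the $G$-form, the first and second such derivatives of $(g-b)\eta(\Theta)$ are therefore of size $d/\rho$ and $d/\rho^2$, and $\eta''$ contributes terms of either sign. These are not controlled by the $C^1$ bound and blow up as $\rho\downarrow0$. So no fixed $d_*$ makes them small, and the bounded barrier gain $C\mathcal L(\underline u-v)\ge Cc$ cannot dominate them. The term you single out, $v_{\xi\xi}$ entering through $D^2\rho$, is not the problem: it is the $(F_t)_z v_{\xi\xi}$ term, which has the favorable sign by properness; its coefficient, of order $d/\rho$, is not small anyway. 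Also, the term $F^{ii}\lambda_i^2/\lambda_1$ you want to absorb against does not arise from your test function $v_{\xi\xi}+C(\underline u-v)$.

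The missing observation is that on the transition region $\rho^{-a}\le\Delta v/P$, so the blow-up in $\rho$ turns into a sublinear power of the trace. The paper writes the equation as $\sigma_2(D^2v_t)=B(x,v_t,T)$ with $B=h_t^2$, $T=\Delta v_t$, $B_z\ge0$, $B_T\le0$. It checks
$|B_{xx}|+|B_{xz}|+|B_{zz}|\le C(1+T^{2/a})$, $|B_{xT}|+|B_{zT}|\le C(1+T^{1/a-1})$, and $|B_{TT}|\le C/T^2$.
It then applies the maximum principle to $T+|x|^2$ via \eqref{wp-trace-identity]}. At the maximum the left side is at most $-2(n-1)T$ and the right side is at least $-C(1+T^{2/a})$, so $a>2$ closes the bound.

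Your test function can also be rescued in the quotient form. There the cutoff derivatives come divided by $S$, giving $\rho^{-2}/S\le\rho^{a-2}/P$. The mixed third-order terms involve only $DT$ and are absorbed by the $-\phi''(s)(\tr Z)^2$ part of \eqref{eqn3.4}. But the blow-up is again controlled by $\rho^{-a}\le S/P$, not by smallness of $d$.

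Two minor points. First, $F_t$ is smooth, not merely $C^{a-1}$: for bounded $S$, the argument of $\eta$ is below $1$ near $\{W-z-\delta_0=0\}$, so the cutoff vanishes identically there. You need this for $v\in C^\infty(\overline D)$. Second, in the gradient step the $x$-derivative of the cutoff carries a factor $S$ that is bounded only after it cancels against the quotient's denominator.
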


\begin{proof}
For $0\leq t\leq1$, put
\[
h_t(x,z,S)=g(x)-t(g(x)-b(x))
\eta\bigl((W(x)-z-\delta_0)_+^aS/P\bigr)
\]
and, for $S=\tr H>0$, define
\begin{equation}\label{wp-quotient}
F_t(x,z,H)=\frac{\sigma_2(H)-h_t(x,z,S)^2}{S}.
\end{equation}
We have $m\leq h_t\leq M$, $(h_t)_z\geq0$, and $(h_t)_S\leq0$.
For fixed $x,z,t$, write $h=h_t$ and
$s=(W-z-\delta_0)_+^aS/P$. Then
\[
h^2-2Shh_S\geq m^2
\]
and
\begin{align*}
(h-Sh_S)^2+S^2hh_{SS}
&=\bigl(h+t(g-b)s\eta'(s)\bigr)^2
-ht(g-b)s^2\eta''(s)\\
&\geq m^2-CMd.
\end{align*}
Choose $d_*>0$ so that the last expression is positive whenever
$d\leq d_*$. By~\eqref{eqn3.3}--\eqref{eqn3.5}, $F_t$ is concave and
elliptic in $H$, and $(F_t)_z\leq0$. The functions $F_t$ are smooth:
near $W-z-\delta_0=0$ with finite $S$, the cutoff is constant.

We use the method of continuity for
$F_t(x,v_t,D^2v_t)=0$ with boundary value $u$, starting from $v_0=u$.
Every solution with positive trace is admissible, and the subsolution
$u$ and supersolution $W$ give
$u\leq v_t\leq W$. Hence there is a fixed collar of $\partial D$
on which $W-v_t-\delta_0<0$ and $G(D^2v_t)=g$.
In the rest of this proof, constants may depend on the fixed smooth data,
$\delta_0$, and $P$, but not on $t$.

Set $q(x)=(|x|^2-9/4)/2$ and choose $\kappa>0$ so that
$\underline u=u+\kappa q$ satisfies $G(D^2\underline u)>M$.
Since $h_t\leq M$, there is $c>0$ such that
$F_t(x,z,D^2\underline u)\geq c$ uniformly in $x,z,t$.
Concavity in $H$, $\underline u\leq v_t$, and $(F_t)_z\leq0$ give
\begin{equation}\label{wp-strict-barrier}
\mathcal L(\underline u-v_t)\geq c,
\qquad
\mathcal L=F_t^{ij}\partial_{ij}+(F_t)_z.
\end{equation}
The quantities $(F_t)_{x_k}(x,v_t,D^2v_t)$ are bounded independently
of $D^2v_t$. Indeed, $S\geq c_nm$, and the term in $(h_t)_{x_k}$
containing $S$ is
\[
-\frac{ta(g-b)}P\eta'(s)
(W-v_t-\delta_0)_+^{a-1}W_kS;
\]
the factor $S$ cancels with the denominator in~\eqref{wp-quotient}.
Differentiation gives $\mathcal L(v_t)_k=-(F_t)_{x_k}$.
On $\partial D$, the comparison bounds yield
$W_\nu\leq(v_t)_\nu\leq u_\nu$, and tangential derivatives are
prescribed. Apply the maximum principle to
$\pm(v_t)_k+C(\underline u-v_t)$, using~\eqref{wp-strict-barrier},
to obtain a global gradient bound.

The boundary Hessian argument of Proposition~\ref{thm3.2} applies
with $W$ in place of $w_0$. Indeed, $G(D^2v_t)=g$ in the fixed
collar and $u\leq v_t\leq W$; in particular,
$v_t-\underline u\geq-\kappa q$ supplies the inner-edge barrier,
and strict admissibility of $W$ supplies the positive tangential trace
used to recover the pure normal second derivative.

It remains to bound the trace in the interior. Write $T=\Delta v_t$
and $B(x,z,T)=h_t(x,z,T)^2$. Then $B_z\geq0$ and $B_T\leq0$.
In the transition region,
$(W-v_t-\delta_0)^aT/P\in[1,2]$; partial differentiation at fixed
$z,T$ gives
\begin{gather*}
|B_{xx}|+|B_{xz}|+|B_{zz}|\leq C(1+T^{2/a}),\\
|B_{xT}|+|B_{zT}|\leq C(1+T^{1/a-1}),\qquad
|B_{TT}|\leq C/T^2.
\end{gather*}
These bounds hold also where the cutoff is constant. Taking the
Laplacian of $\sigma_2(D^2v_t)=B(x,v_t,T)$ yields
\begin{align}\label{wp-trace-identity}
\bigl(N(D^2v_t)-B_TI\bigr):D^2T
={}&B_zT+B_{zz}|Dv_t|^2+2B_{xz}\cdot Dv_t
+\tr B_{xx}\notag\\
&+2B_{zT}Dv_t\cdot DT+2B_{xT}\cdot DT
+B_{TT}|DT|^2\notag\\
&+|D^3v_t|^2-|DT|^2.
\end{align}
At an interior maximum of $T+|x|^2$, the left-hand side is at most
$-2(n-1)T$, whereas the right-hand side is bounded below by
$-C(1+T^{2/a})$. Since $a>2$, this bounds $T$. Boundary maxima
are already controlled, and $|D^2v_t|\leq T$ gives the full Hessian
bound.

The Hessians now remain in a compact subset of $\Gamma_2$,
so the operators are uniformly elliptic. The gradient bound makes
$F_t(x,v_t(x),H)$ uniformly H\"older in $x$ on this compact set.
The regularity and continuity arguments of
Proposition~\ref{thm3.2} therefore apply, using the full linearization
$\mathcal L$ for openness; its zeroth-order coefficient is
nonpositive. The maximum principle for the linearization between
two solutions also proves uniqueness.
\end{proof}

\subsection{Contraction of the trace tails}\label{sec-tails}

For $A,B\in\Gamma_2$, set
$T=\tr A$, $S=\tr B$, $g=G(A)$, and $h=G(B)$. Then
\begin{equation}\label{wp-square-defect}
\mathscr D(A,B)
=\frac{TS}{2}\left|\frac AT-\frac BS\right|^2
+\left(g\sqrt{\frac ST}-h\sqrt{\frac TS}\right)^2.
\end{equation}
This follows by expanding the squares and using
$|A|^2=T^2-2g^2$ and $|B|^2=S^2-2h^2$.
When $m\leq g,h\leq M$, it implies
\begin{equation}\label{wp-ratio-defect}
\mathscr D(A,B)\geq m^2(T/S+S/T)-2M^2.
\end{equation}

\begin{proof}[Proof of Theorem~\ref{thm-w2p}]
Use the domains and smooth approximations fixed above, satisfying
\eqref{wp-uniform-gap}. The uniform amplitude bound and the cutoff
argument proving~\eqref{w21-interior-estimate} give
\begin{equation}\label{wp-trace-mass}
\sup_j\int_V\Delta u_j\leq M_T<\infty.
\end{equation}
Until $p$ is fixed below, all constants except $P_b$ depend only on
$n$, $\lambda$, $\Lambda$, and $A_*$. In particular, they are independent
of $b$ and its derivatives, of $P$, and of the approximation index.

Let $b$ satisfy~\eqref{wp-minorant}, with $d\leq\min\{d_*,1,m\}$.
For every large $j$ and every $P\geq P_b$, let $v=v_{j,P}$ solve
\eqref{wp-auxiliary} with $u=u_j$, $W=W_j$, and $g=g_j$.
Suppress the index $j$ and write
\[
T=\Delta u,\quad S=\Delta v,\quad h=G(D^2v),\quad
\rho_u=W-u-\delta_0,\quad \rho=W-v-\delta_0,
\]
\begin{equation}\label{wp-weighted-traces}
X=(\rho_u)_+^aT,\qquad Y=\rho_+^aS,
\qquad \mathcal A_P=\{\eta(Y/P)>0\}.
\end{equation}
The order~\eqref{wp-order} gives $\rho\leq\rho_u$ and
$\mathcal A_P\subset\{Y>P\}\subset V$.
The functions $v$ have uniform amplitude and density bounds, hence
a uniform $C^{0,1/2}(V')$ bound, and
$\{\rho>0\}\subset\{\rho_u>0\}\Subset V$.
On $\{\rho>0,\ Y\geq2P\}$, equation~\eqref{wp-auxiliary} agrees
with $\sigma_2(D^2v)=b^2$ through second order, including at $Y=2P$,
since $\eta$ is smooth and constant on $[2,\infty)$.
Lemma~\ref{wp-weighted-bound} therefore gives
\begin{equation}\label{wp-comparison-cap}
Y\leq C_1P,
\end{equation}
with a uniform constant $C_1\geq1$.

Put $\mathscr D=\mathscr D(D^2u,D^2v)$. By~\eqref{eqn2.8},
\begin{equation}\label{wp-energy}
\int_D\mathscr D\leq\int_D(g-h)^2\leq d^2|\mathcal A_P|.
\end{equation}
Fix
\begin{equation}\label{wp-tail-constants}
c=\frac{m^2}{4M^2},\qquad
K=\frac{2^{a+2}C_1M^2}{m^2},\qquad
\mu(P)=|\{X>cP\}|.
\end{equation}
On $\mathcal A_P\cap\{X\leq cP\}$ one has
$S/T\geq Y/X>1/c$, and hence $\mathscr D\geq2M^2$ by
\eqref{wp-ratio-defect}. It follows that
\[
|\mathcal A_P|\leq\mu(P)+\frac{d^2}{2M^2}|\mathcal A_P|.
\]
Since $d\leq M$, we conclude
\begin{equation}\label{wp-active-volume}
|\mathcal A_P|\leq2\mu(P),\qquad
\int_D\mathscr D\leq2d^2\mu(P).
\end{equation}

Set $\mathcal N=N((D^2u+D^2v)/2)$. The midpoint linearization gives
\[
\mathcal N:D^2(v-u)=h^2-g^2,
\qquad v-u=0\quad\text{on }\partial D.
\]
For $H\in\Gamma_2$, let $\lambda_i$ be its eigenvalues and
$\nu_i=\tr H-\lambda_i$ those of $N(H)$.
Then $\nu_i\nu_j\geq\sigma_2(H)$ for $i\ne j$. Indeed, writing
$r=\sum_{k\ne i,j}\lambda_k$, the difference $\nu_i\nu_j-\sigma_2(H)$
is $\frac12(r^2+\sum_{k\ne i,j}\lambda_k^2)\geq0$.
Multiplication over all pairs gives
$\det N(H)\geq\sigma_2(H)^{n/2}$. Concavity of $G$ at the midpoint
therefore gives $\det\mathcal N\geq m^n$.
Since $|h^2-g^2|\leq2Md\,\mathbf1_{\mathcal A_P}$,
the ABP estimate in determinant form~\cite[Lemma~9.3]{GT} yields
\begin{equation}\label{wp-abp}
\begin{split}
e_P:=\|v-u\|_{L^\infty(D)}
&\leq C_n\operatorname{diam}(D)
\left\|\frac{g^2-h^2}{(\det\mathcal N)^{1/n}}\right\|_{L^n(D)}\\
&\leq Cd|\mathcal A_P|^{1/n}
\leq Cd\mu(P)^{1/n}.
\end{split}
\end{equation}

On $\{\rho_u>2e_P\}$, one has $\rho\geq\rho_u/2$.
Consequently~\eqref{wp-comparison-cap} implies
\[
\frac TS\geq\frac{X}{2^aC_1P}.
\]
On $\{X>KP,\ \rho_u>2e_P\}$, the choice of $K$ and
\eqref{wp-ratio-defect} give
\[
X\leq(2^{a+1}C_1/m^2)P\mathscr D.
\]
Together with the energy bound and~\eqref{wp-trace-mass}, this gives
\begin{align*}
\int_{\{X>KP,\ \rho_u>2e_P\}}X&\leq Cd^2P\mu(P),\\
\int_{\{0<\rho_u\leq2e_P\}}X
&\leq(2e_P)^aM_T\leq Cd^a\mu(P)^{a/n}.
\end{align*}
Since $a\geq\max\{n,2\}$, $d\leq1$, $P\geq1$, and
$\mu(P)\leq|V|$, the last term is at most $Cd^2P\mu(P)$.
Adding the two bounds yields
\begin{equation}\label{wp-pure-tail}
\int_{\{X>KP\}}X\leq A_0d^2P\,|\{X>cP\}|,
\qquad P\geq P_b.
\end{equation}
The constants $A_0,K,c$ are fixed before $p,d,b$ are chosen.

For $t>0$, set $H_X(t)=\int_{\{X>t\}}X$ and $q=K/c>1$.
Since $t|\{X>t\}|\leq H_X(t)$, inequality~\eqref{wp-pure-tail}
becomes
\begin{equation}\label{wp-tail-contraction}
H_X(qt)\leq\theta H_X(t),\qquad
t\geq t_0:=cP_b,\qquad \theta=A_0d^2/c.
\end{equation}
Now fix $p>1$, choose $d$ so small that
$d\leq\min\{d_*,1,m\}$ and $\theta\leq q^{-p}$, and then
choose the smooth minorant $b$ in~\eqref{wp-minorant}.
The threshold $P_b$ is finite and independent of $j$.
The uniform bound for $\rho_u$ and~\eqref{wp-trace-mass} give
$\int_DX\leq M_X$ with $M_X$ independent of $j$.
For each $j$, $X$ is bounded. Integrating~\eqref{wp-tail-contraction}
and using Tonelli's formula gives
\[
\int_DX^p=(p-1)\int_0^\infty t^{p-2}H_X(t)\,dt
\leq M_X(qt_0)^{p-1}+\theta q^{p-1}\int_DX^p.
\]
Since $\theta q^{p-1}\leq q^{-1}<1$, this yields
$\int_DX^p\leq C(p,q)M_Xt_0^{p-1}$ uniformly in $j$.

On $U_1$,~\eqref{wp-uniform-gap} and $|D^2u_j|\leq\Delta u_j$ give
$\|D^2u_j\|_{L^p(U_1)}\leq C P_b^{1-1/p}$.
A subsequence of $D^2u_j$ converges weakly in $L^p(U_1)$.
Uniform convergence of $u_j$ identifies this limit with the
distributional Hessian of $u$. Interior interpolation gives
$u\in W^{2,p}(B_1)$, and weak lower semicontinuity together
with~\eqref{wp-modulus-threshold} yields
\begin{equation}\label{wp-modulus-estimate}
\|D^2u\|_{L^p(B_1)}
\leq C P_b^{1-1/p}
\leq C(1+\varepsilon^{-2})^{1-1/p}.
\end{equation}
Here $C,d$ depend only on $n,p,\lambda,\Lambda,A_*$, and
$\varepsilon$ is chosen from $\omega_f$ after $d$ is fixed.
Returning to the original variables gives
\[
\|D^2u_r\|_{L^p(B_1)}
=r^{-n/p}\|D^2u\|_{L^p(B_r(x_0))}.
\]
Moreover, $A_*=r^{-2}\osc_{B_{2r}(x_0)}u$, and the relevant modulus
is that of $f_r$ on $B_2$. This proves~\eqref{wp-interior-estimate}
and the local regularity assertion.
\end{proof}

\section{Remarks on interior \texorpdfstring{$C^{2,\alpha}$}{C2,alpha} regularity}

We revisit the interior $C^{2,\alpha}$ regularity theorem of
Chen, Zhou, and Zhu~\cite[Theorems~1.1--1.2]{CZZ}, using
Theorem~\ref{thm-w2p} and their second-difference estimate.

\begin{theorem}\label{cor-schauder}
Let $\Omega\subset\mathbb R^n$ be a domain, $n\geq2$, and let
$u\in C^0(\Omega)\cap\Phi_2(\Omega)$ satisfy
\[
\mu_2[u]=f\,dx,\qquad f\in C^\alpha_{\mathrm{loc}}(\Omega),\qquad
0<\lambda\leq f\leq\Lambda<\infty,
\]
where $0<\alpha<1$. Then $u\in C^{2,\alpha}_{\mathrm{loc}}(\Omega)$.
For every $B_{2r}(x_0)\Subset\Omega$,
\[
\|D^2u\|_{L^\infty(B_r(x_0))}
+r^\alpha[D^2u]_{C^\alpha(B_r(x_0))}\leq C,
\]
where $C$ depends only on $n$, $\alpha$, $\lambda$, $\Lambda$,
$r^{-2}\osc_{B_{2r}(x_0)}u$, and
$[\widetilde f]_{C^\alpha(B_2)}$, with
$\widetilde f(y)=f(x_0+ry)$.
\end{theorem}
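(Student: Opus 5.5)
Since $f\in C^\alpha$ is continuous, Theorem~\ref{thm-w2p} applies. After the rescaling $u_r(y)=r^{-2}u(x_0+ry)$ it suffices to work on $B_1\Subset B_2\Subset\Omega$ and prove $\|D^2u\|_{L^p(B_{3/2})}\le C$ for a large exponent $p$. Here $C$ depends only on $n,p,\lambda,\Lambda$, $A_*=\osc_{B_2}u$, and the modulus of $f$. Since $f\in C^\alpha$, this modulus is $\omega_f(t)\le[f]_{C^\alpha}t^\alpha$. The plan then feeds this integrability into the second-difference estimate of Chen, Zhou, and Zhu~\cite{CZZ}. Their $C^{2,\alpha}$ argument for viscosity solutions needs, as its a priori input, a $W^{2,p}$ bound with $p$ large relative to $n$; this is the role of weighted $W^{2,p}$ estimates in \cite{CJTZ,CZZ}. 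So I will fix $p=p(n)$ above the threshold their estimate requires, in the spirit of Urbas~\cite{Urbas01}, who needs $p>(n-1)$ when $k=2$.

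The key steps are as follows. (1) Approximate as in Lemma~\ref{thm2.4} by smooth admissible $u_j$ on $D=B_{3/2}$ with $G(D^2u_j)=g_j$, where $g_j=\sqrt f*\vartheta_{1/j}$. These densities have uniformly bounded $C^\alpha$ seminorms and converge uniformly, and $u_j\to u$ uniformly. (2) The proof of Theorem~\ref{thm-w2p} runs on the approximations themselves, and its bound \eqref{wp-modulus-estimate} is uniform in $j$. It therefore gives $\|D^2u_j\|_{L^p(B_{9/8})}\le C$ uniformly, for the chosen $p$. (3) Apply the CZZ second-difference estimate to $u_j$. It bounds $\sup_{B_1}|D^2u_j|$ by the $L^p$ norm of the Hessian, the $C^\alpha$ seminorm of $g_j^2$, and the amplitude. (4) Once $D^2u_j$ is bounded, the equation is uniformly elliptic and concave on a compact subset of $\Gamma_2$. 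The Evans--Krylov estimate combined with Caffarelli's $C^{2,\alpha}$ perturbation theory for concave equations with $C^\alpha$ right-hand side~\cite{CC} then gives $[D^2u_j]_{C^\alpha(B_{1/2})}\le C$. A covering argument replaces $B_{1/2}$ by $B_1$ after shrinking radii. (5) Pass to the limit by Arzel\`a--Ascoli, so that $u\in C^{2,\alpha}(B_1)$ with the same bound. Scaling back gives the factor $r^\alpha$, and $[\widetilde f]_{C^\alpha(B_2)}$ is exactly the scaled seminorm.

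The main obstacle is step (3). I must check that the CZZ estimate is genuinely a priori, taking an $L^p$ Hessian bound to an $L^\infty$ one for smooth admissible solutions, and that the exponent it requires is finite. Only then does Theorem~\ref{thm-w2p}, valid for every finite $p$, suffice. If their statement instead presupposes convexity or a Lipschitz density, I would fall back on Urbas's integral argument~\cite{Urbas01} with $p>n-1$ and replace differentiation of $f$ by second difference quotients. The boundedness of $D^2u_j$ in $L^p$ is what controls the error terms there. A secondary point is that the estimate of Theorem~\ref{thm-w2p} must hold uniformly for the smoothed solutions $u_j$ and not only for $u$. This follows because its proof already works on the approximations and the modulus of $g_j$ is controlled by that of $g$.
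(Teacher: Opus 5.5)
Your architecture matches the paper's: smooth approximations with controlled data, Theorem~\ref{thm-w2p} applied uniformly to them, the Chen--Zhou--Zhu second-difference input for an $L^\infty$ Hessian bound, then Evans--Krylov plus perturbative Schauder and a limit. However, step (3), which carries all the content, is left as a black box, and you describe its form incorrectly. \cite[Lemmas~2.2 and~2.4]{CZZ} do not take $\|D^2u\|_{L^p}$ to $\sup|D^2u|$; they give a \emph{lower} bound on second differences. If $\sup_{B_{1/2}}\Delta u_j\to\infty$ along smooth solutions with controlled data, then, after passing to a subsequence, there are $M_j\to\infty$, points $z_j$, unit vectors $e_j$, and scales $h_j\asymp 1/(M_j\log^2(e+M_j))$ with $z_j,z_j\pm h_je_j\in B_{3/4}$ and
\[
\frac{u_j(z_j+h_je_j)+u_j(z_j-h_je_j)-2u_j(z_j)}{h_j^2}\geq cM_j .
\]
The missing idea is the matching upper bound at that scale. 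Theorem~\ref{thm-w2p} supplies it only through Morrey's inequality. With $p=2n$ you get $[Du_j]_{C^{0,1/2}(B_{3/4})}\leq K$ uniformly. Write the second difference as $h_j^{-2}\int_0^{h_j}\bigl(Du_j(z_j+te_j)-Du_j(z_j-te_j)\bigr)\cdot e_j\,dt$; this is at most $CKh_j^{-1/2}\leq CK\sqrt{M_j}\log(e+M_j)$, which contradicts $M_j\to\infty$. This is exactly how the paper gets the Hessian bound. It also settles the exponent question you left open. Because the CZZ scale is essentially $1/M$, any H\"older exponent $\beta>0$ for $Du$ gives an upper bound of order $(M\log^2 M)^{1-\beta}=o(M)$, so any $p>n$ suffices.

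Your fallback would not work. Urbas's integral argument \cite{Urbas01} differentiates the equation twice and uses derivatives of the density. For $f\in C^\alpha$, second difference quotients of $f$ at scale $h$ are only of size $h^{\alpha-2}[f]_{C^\alpha}$. These blow up as $h\to0$, and an $L^p$ bound on $D^2u$ cannot control them. Steps (1), (2), (4), (5) are fine. In (2) it is simpler to apply Theorem~\ref{thm-w2p} directly to each $u_j$: it is a smooth solution with continuous density $g_j^2$, whose modulus of continuity is controlled by that of $f$.
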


\begin{proof}
By scaling and subtracting a constant, it suffices to work on $B_2$
with a fixed $L^\infty$ bound for $u$. We first consider smooth
admissible solutions for which the stated data are uniformly controlled.
Since $\omega_f(t)\leq[f]_{C^\alpha(B_2)}t^\alpha$,
Theorem~\ref{thm-w2p}, with $p=2n$, and Morrey's inequality give
a uniform bound $[Du]_{C^{0,1/2}(B_{3/4})}\leq K$.

Suppose, for contradiction, that there is a sequence of such solutions
$u_j$ with $\sup_{B_{1/2}}\Delta u_j\to\infty$.
By~\cite[Lemmas~2.2 and~2.4]{CZZ}, after passing to a subsequence,
there exist $M_j\to\infty$, unit vectors
$e_j$, points $z_j$, and scales
\[
h_j\asymp\frac{1}{M_j\log^2(e+M_j)},
\]
with comparison constants independent of $j$ and
$z_j,z_j\pm h_je_j\in B_{3/4}$ for all large $j$.
Their second-difference bound and the gradient H\"older estimate give
\begin{align*}
cM_j
&\leq\frac{u_j(z_j+h_je_j)+u_j(z_j-h_je_j)-2u_j(z_j)}{h_j^2}\\
&=\frac1{h_j^2}\int_0^{h_j}
\bigl(Du_j(z_j+te_j)-Du_j(z_j-te_j)\bigr)\cdot e_j\,dt\\
&\leq CK h_j^{-1/2}
\leq CK\sqrt{M_j}\log(e+M_j).
\end{align*}
This contradicts $M_j\to\infty$. Since $|D^2u|\leq\Delta u$
for admissible solutions, we obtain the Hessian bound on $B_{1/2}$.
A fixed covering and rescaling give the same bound on $B_{3/2}$.

The Hessian bound and $\sigma_2(D^2u)\geq\lambda$ keep the Hessians
in a fixed compact subset of $\Gamma_2$, on which
$G=\sqrt{\sigma_2}$ is smooth, concave, and uniformly elliptic.
The argument of~\cite[Section~4.2]{CZZ} now applies:
the supporting-plane extension of $G$ and the Evans--Krylov estimate
give a uniform $C^{2,\beta}$ bound for some $0<\beta<\alpha$;
the local perturbative Schauder estimate then gives a
$C^{2,\alpha}$ bound on $B_1$. All constants depend only on the
stated data.

For a general solution, apply Lemma~\ref{thm2.4} on $B_{7/4}$,
convolving $g=\sqrt f$ within $B_2$. The resulting densities
$f_j=g_j^2$ satisfy $\lambda\leq f_j\leq\Lambda$, with
$C^\alpha(B_{7/4})$ norms controlled by the stated data, and
$u_j\to u$ uniformly on $\overline{B}_{7/4}$.
The smooth estimates, applied on smaller balls, give a uniform
$C^{2,\alpha}(B_1)$ bound. Passing to a subsequence locally in $C^2$ and
using lower semicontinuity of the H\"older seminorm gives the
asserted estimate. Scaling back proves the theorem.
\end{proof}

\section*{Acknowledgements}

The author is deeply grateful to Professors Dongsheng Li and Yu Yuan for their guidance and support.
AI tools were used as an aid in preliminary mathematical exploration and in refining the exposition.
All mathematical arguments were independently verified by the author, who takes full responsibility for the manuscript.

\end{document}